\newcommand{\scrB}{\mathcal{B}}
\newcommand{\bbF}{\mathbb{F}}
\newcommand{\rank}{\text{rank}}
\renewcommand{\mod}[1]{\text{ (mod $#1$)}}
 \numberwithin{equation}{section}
 \newtheorem{theorem}[equation]{Theorem}
 \newtheorem{cor}[equation]{Corollary}
 \newtheorem{lemma}[equation]{Lemma}
\newtheorem{problem}{Problem}
 \theoremstyle{definition}
 \newtheorem{ex}[equation]{Example}
\title
 [Delsarte Cocliques in Strongly Regular Graphs]{Ovoids of Generalized Quadrangles of Order $(q, q^2-q)$ and Delsarte Cocliques in Related Strongly Regular Graphs}
\author[]{Mohammad Adm\textsuperscript{1}}
\author{Ryan Bergen\textsuperscript{2}}
\author{Ferdinand Ihringer\textsuperscript{3}}
\author{Sam Jaques\textsuperscript{2}}
\author{Karen Meagher\textsuperscript{4}}
\author{Alison Purdy\textsuperscript{2}}
\author{Boting Yang\textsuperscript{5}}
\address{\textsuperscript{1}Department of Mathematicsand Statistics, University of Konstanz, Konstanz, Germany and Department of Mathematics and Statistics, University of Regina, Regina, Canada. Adm's research was supported by the German Academic Exchange Service (DAAD) with funds from the German Federal Ministry of Education and Research (BMBF) and the People Programme (Marie Curie Actions) of the European Union’s Seventh Framework Programme (FP7/2007-2013) under REA grant agreement no. 605728 (P.R.I.M.E. – Postdoctoral Researchers International Mobility Experience).}
\address{\textsuperscript{2}Department of Mathematics and Statistics, University of Regina.}
\address{\textsuperscript{3}Einstein Institute of Mathematics, Hebrew University of Jerusalem, Israel. Research supported by a PIMS postdoctoral fellowship while the author was a postdoctoral fellow at the University of Regina.}
\address{\textsuperscript{4}Department of Mathematics and Statistics, University of Regina. Research supported in part by an NSERC Discovery Research Grant, Application No.:RGPIN-341214-2013.}
\address{\textsuperscript{5}Department of Computer Science, University of Regina. Research supported in part by an NSERC Discovery Research Grant, Application No.:RGPIN-2-13-261290.}
\begin{document}

\begin{abstract}
  We investigate strongly regular graphs for which Hoffman's ratio
  bound and Cvetcovi\'{c}'s inertia bound are equal. This means that
  $ve^- = m^-(e^- - k)$, where $v$ is the number of vertices, $k$ is
  the regularity, $e^-$ is the smallest eigenvalue, and $m^-$ is the
  multiplicity of $e^-$.  We show that Delsarte cocliques do not exist
  for all Taylor's $2$-graphs and for point graphs of generalized
  quadrangles of order $(q,q^2-q)$ for infinitely many $q$.  For cases
  where equality may hold, we show that for nearly all parameter sets,
  there are at most two Delsarte cocliques.
\end{abstract}

\maketitle

Keywords: strongly regular graph, ovoid, generalized quadrangle, Delsarte coclique, quasisymmetric design.

MSC codes: 51E12, 05B05, 05C69, 05E30. 

\section{Introduction}

Hoffman's ratio bound and Cvetcovi\'{c}'s inertia bound are two of the
best known bounds for cocliques in regular graphs.  We investigate the
case where the unweighted versions of both bounds are equal for
strongly regular graphs.  We refer to \cite{Brouwer1989,Brouwer2012}
for a general discussion of strongly regular graphs.  In this paper a
strongly regular graph $\Gamma$ has parameters $v$, $k$, $\lambda$,
$\mu$, where $v$ denotes the number of vertices of $\Gamma$, $k$
denotes the degree of each vertex of $\Gamma$, $\lambda$ denotes the
size of the common neighborhood of two adjacent vertices, and $\mu$
denotes the size of the common neighborhood of two non-adjacent
vertices.  For two vertices $u$, $v$ in a graph $\Gamma$, we use
$u\sim v$ to denote that $u$ is adjacent to $v$.  The adjacency matrix
$A$ of a strongly regular graph has three eigenvalues, $k$, $e^+$ and
$e^-$, where $k \geq e^+ \geq 0 > e^-$. A strongly regular graph
$\Gamma$ is \emph{primitive} if both $\Gamma$ and $\overline{\Gamma}$
are connected; this implies that $e^+>0$. In this paper we only
consider primitive strongly regular graphs. We denote the eigenspaces
that correspond to $k$, $e^+$, and $e^-$ by $\langle j \rangle$,
$V^+$, and $V^-$ respectively (where $j$ denotes the all ones vector). We denote the multiplicity of $e^+$ by
$m^+$, and the multiplicity of $e^-$ by $m^-$.  Throughout this paper
every graph that is called $\Gamma$ is strongly regular and its
parameters are named as above.

Hoffman's ratio bound and Cvetcovi\'{c}'s inertia bound state
(respectively) that for a primitive strongly regular graph $\Gamma$, a coclique $Y$ in $\Gamma$ satisfies
\begin{align*}
  |Y| \leq \frac{ve^-}{e^- - k}, \qquad |Y| \leq m^-. 
\end{align*}
A coclique of size $\frac{ve^-}{e^- - k}$
is called a \textit{Delsarte coclique}. 
We refer to Section \ref{sec:pre} for further definitions.

It has long been known that the case where both bounds are tight is special;
for example Haemers investigated this in his PhD thesis in 1979 \cite[Th. 2.1.7]{Haemers1979}.
Our work extends an investigation by Haemers and Higman \cite{Haemers1989} for strongly regular graphs
in general, and results by Makhnev and Makhnev for generalized quadrangles of order $(q, q^2-q)$ \cite{Makhnev2003}.
For our purposes, we phrase some of these results in a different way
than the existing literature. For example, we provide
a short translation of \cite[Th. 9.4.1]{Brouwer2012} as Theorem \ref{thm:subsrg}.

Our main results are based on designs derived from strongly
regular graphs. A $2$-$(\tilde{v}, \tilde{k}, \tilde{\lambda})$ design
with intersection numbers $s_1$ and $s_2$ is a set of $\tilde{k}$-sets
$\scrB$ such that
\begin{enumerate}[(a)]
 \item all $b \in \scrB$ satisfy $b \subseteq \{ 1, \ldots, \tilde{v} \}$,
 \item each pair $\{ i, j \} \subseteq \{ 1, \ldots, \tilde{v} \}$ lies in exactly
      $\tilde{\lambda}$ elements of $\scrB$, and
 \item $|b \cap b'| \in \{ s_1, s_2, \tilde{k} \}$
      for all $b, b' \in \scrB$.
\end{enumerate}
If $s_1 \neq s_2$, then $\scrB$ is called a \emph{quasisymmetric design}.
If $s_1 = s_2$, then $\scrB$ is called a \emph{symmetric design}. 
The \emph{replication number} $\tilde{r}$ denotes the number of blocks that contain one
given element and satisfies
\begin{align*}
  \tilde{r}(\tilde{k}-1) = (\tilde{v}-1)\tilde{\lambda}.
\end{align*}
Notice that we have $\tilde{r} = \tilde{k}$ if and only if the design
is symmetric.

Our next result describes how to constuct designs from the
strongly regular graphs that we consider in this paper. We will see
that this result is a reformulation of Theorem 9.4.1 from \cite{Brouwer2012}.

\begin{theorem}\label{thm:quasisdesign}
  Let $\Gamma$ be a primitive strongly regular graph, $Y$ a Delsarte coclique
  of $\Gamma$ and $ve^- = m^-(e^- - k)$.
  Let $\overline{Z}$ denote the set of vertices of $\Gamma$ that are not in $Y$.
  For $z \in \overline{Z}$, let $b_z$ denote 
    $\{ y \in Y: y \sim z \}$.
  Then $\overline{\scrB} := \{ b_z: z \in \overline{Z} \}$
  is a quasisymmetric $2$-$(m^-, -e^-,\mu)$ design with replication number $k$. 
  Two adjacent vertices in $\overline{Z}$ correspond
  to two blocks with intersection size $-(e^+)^2 - e^+ - e^-$, while two non-adjacent
  vertices in $\overline{Z}$ correspond to two blocks with intersection size $-(e^+)^2 - e^-$.
\end{theorem}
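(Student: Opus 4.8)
The plan is to verify the design axioms and then the two intersection numbers, the only substantial step being a single identity for a Gram-type matrix. Let $A$ be the adjacency matrix of $\Gamma$ and, relative to the ordered partition $(\overline{Z},Y)$, write
\[
A=\begin{pmatrix} C & B\\ B^{\top}& O\end{pmatrix},
\]
where $C$ is the adjacency matrix of the subgraph of $\Gamma$ induced on $\overline{Z}$, the block $O$ records that $Y$ is a coclique, and $B$ is the $\overline{Z}\times Y$ matrix whose $z$-th row is the characteristic vector of $b_z\subseteq Y$. The first step is to collect the standard properties of a Delsarte coclique. Because equality holds in Hoffman's bound, the characteristic vector of $Y$ lies in $\langle j\rangle\oplus V^{-}$; counting edges between $Y$ and $\overline{Z}$ then shows every $z\in\overline{Z}$ has exactly $-e^{-}$ neighbours in $Y$, i.e.\ $B\,j_Y=-e^{-}j_{\overline{Z}}$, so every block has size $\tilde k=-e^{-}$ and the induced subgraph on $\overline{Z}$ is regular of degree $k+e^{-}$. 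Since $Y$ is a coclique, each $y\in Y$ has all $k$ of its edges going into $\overline{Z}$, hence lies in exactly $k$ blocks, so $\tilde r=k$; and two distinct (necessarily non-adjacent) elements $y,y'\in Y$ have all $\mu$ of their common neighbours in $\overline{Z}$, so $\{y,y'\}$ lies in exactly $\mu$ blocks, giving $\tilde\lambda=\mu$. The standing hypothesis $ve^{-}=m^{-}(e^{-}-k)$ combined with the Delsarte equality gives $|Y|=m^{-}$, so at this point $\overline{\scrB}$ is a $2$-$(m^{-},-e^{-},\mu)$ design with replication number $k$.

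For the intersection numbers, note that $|b_z\cap b_{z'}|=(BB^{\top})_{zz'}$ for $z\neq z'$, while $(BB^{\top})_{zz}=-e^{-}$, so the task is to identify $BB^{\top}$. Reading off the $(Y,Y)$- and $(\overline{Z},\overline{Z})$-blocks of the identity $A^{2}=(k-\mu)I+(\lambda-\mu)A+\mu J$ yields
\[
B^{\top}B=(k-\mu)I_Y+\mu J_Y,\qquad BB^{\top}=(k-\mu)I_{\overline{Z}}+(\lambda-\mu)C+\mu J_{\overline{Z}}-C^{2}.
\]
The key input is Theorem~\ref{thm:subsrg}: the subgraph induced on $\overline{Z}$ is strongly regular with eigenvalues $k+e^{-}$ (its degree), $e^{+}$, and $\lambda-\mu=e^{+}+e^{-}$. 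Its defining identity therefore expresses $C^{2}$ as a combination of $I_{\overline{Z}}$, $C$ and $J_{\overline{Z}}$, namely $C^{2}=-e^{+}(e^{+}+e^{-})I_{\overline{Z}}+(2e^{+}+e^{-})C+c'J_{\overline{Z}}$ for some $c'$; substituting this and using only $\lambda-\mu=e^{+}+e^{-}$ and $k-\mu=-e^{+}e^{-}$, the coefficients of $I_{\overline{Z}}$ and of $C$ collapse to $(e^{+})^{2}$ and $-e^{+}$, so $BB^{\top}=(e^{+})^{2}I_{\overline{Z}}-e^{+}C+\gamma J_{\overline{Z}}$ for some $\gamma$, and comparing a diagonal entry (recall $(BB^{\top})_{zz}=-e^{-}$ and $C_{zz}=0$) forces $\gamma=-(e^{+})^{2}-e^{-}$. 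Hence, for $z\neq z'$, $|b_z\cap b_{z'}|=-e^{+}C_{zz'}-(e^{+})^{2}-e^{-}$, which is $-(e^{+})^{2}-e^{+}-e^{-}$ when $z\sim z'$ and $-(e^{+})^{2}-e^{-}$ when $z\not\sim z'$; these differ by $-e^{+}\neq0$, so $\overline{\scrB}$ is genuinely quasisymmetric, as claimed.

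The only nontrivial ingredient is thus Theorem~\ref{thm:subsrg}, and this is exactly where the hypothesis $ve^{-}=m^{-}(e^{-}-k)$ is used. For a self-contained treatment: the degree $k+e^{-}$ is immediate from the Delsarte property; the eigenvalue $e^{+}$ comes from restricting to $\overline{Z}$ those $e^{+}$-eigenvectors of $A$ that vanish on $Y$; and the eigenvalue $\lambda-\mu=e^{+}+e^{-}$ comes from the equitable partition $\{Y,\overline{Z}\}$, using that $C$ commutes with $BB^{\top}$ and that $BB^{\top}$ has spectrum $\{k-\mu,0\}$ on $j_{\overline{Z}}^{\perp}$ by the formula for $B^{\top}B$ above. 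The delicate point, and the step I expect to be the main obstacle, is to show that $e^{-}$ is \emph{not} an eigenvalue of $C$, equivalently that no nonzero $e^{-}$-eigenvector of $A$ is supported on $\overline{Z}$; here $|Y|=m^{-}$ is essential, since it forces the restriction map $V^{-}\to\mathbb{R}^{Y}$ to be a bijection (injectivity can be extracted from a trace computation for $C$). Alternatively one simply invokes the reformulation of \cite[Th.~9.4.1]{Brouwer2012}.
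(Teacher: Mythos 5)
Your proposal is correct and takes essentially the same route as the paper: the design parameters are verified from Theorem \ref{thm:hoffmans_bnd} and the coclique property, and the two intersection numbers rest on Theorem \ref{thm:subsrg}, which you (like the paper) ultimately cite rather than reprove. The only difference is presentational: you read the intersection sizes off the Gram matrix $BB^{\top}$ via the block decomposition of $A^{2}$ and the quadratic identity for the induced subgraph's adjacency matrix $C$, whereas the paper equivalently computes $\mu-\mu'$ and $\lambda-\lambda'$ for the induced subgraph and subtracts.
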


We combine this result with existence results for quasisymmetric designs due to Blokhuis and
Calderbank \cite{Blokhuis1992} which rule out equality in the Hoffman
bound for many feasible parameter sets for strongly regular
graphs. We first consider strongly regular graphs
constructed from generalized quadrangles.

A \textit{generalized quadrangle} of order $(s,t)$ consists of a set $P$ of points, a set $L$ of lines, and an incidence structure $I\subseteq P\times L$. We say that a line $l\in L$ contains a point $p$ if $(p,l)\in I$. The incidence structure must satisfy the following:
\begin{enumerate}[(a)]
\item
for each $p\in P$, there are exactly $t+1$ lines in $L$ that contain $p$,
\item
each $l\in L$ contains exactly $s+1$ points in $P$,
\item
if a point $p$ is not on a line $l$, then there is a unique point $p'$ and line $l'$ such that $l'$ contains $p$ and $p'$, and $l$ contains $p'$.
  \end{enumerate}
A generalized quadrangle induces a strongly regular graph $\Gamma$,
called the \textit{point graph}, by setting the vertices of $\Gamma$ to
be the points $P$, and setting $u\sim v$ if and only if there is a line containing both $u$ and $v$. 
A Delsarte coclique of the point graph of a generalized quadrangle 
is traditionally called an \textit{ovoid}. We refer to \cite[Chapter 1]{Payne2009} for details.
Whether certain generalized quadrangles possess an ovoid is a long-standing open question
which has attracted various researchers, see \cite[Sections 1.8 and 3.4]{Payne2009}.
In particular, we show that for infinitely many choices of $q$, generalized quadrangles
of order $(q,q^2-q)$ do not possess ovoids.

Besides the non-existence proofs for some Delsarte cocliques, our main result is the following.
\begin{theorem}\label{thm:symmdesign}
  Let $\Gamma$ be a primitive strongly regular graph, let $Y, Z$ be different Delsarte cocliques
  of $\Gamma$ and $ve^- = m^-(e^- - k)$.
  For a vertex $z$ in $Z \setminus Y$ let $b_z$ denote $\{ y \in Y: y \sim z \}$.
  Then the following hold.
  \begin{enumerate}[(a)]
   \item The set $\scrB := \{ b_z: z \in Z \setminus Y \}$
  is a symmetric $2$-$\left( \frac{(e^+)^2 - (e^-)^{2}}{(e^+)^{2} +
      (e^-)}, -e^-, -(e^+)^2 - e^- \right)$ 
  design.
  \item $|Y \cap Z| = m^- - \frac{(e^+)^2 - (e^-)^2}{(e^+)^2 + e^-} = \frac{(e^-+1)e^+}{(e^+)^2 + e^-}$.
  \item Every block of the quasisymmetric design of Theorem \ref{thm:quasisdesign} corresponding to a vertex that is not in $Z$ contains exactly $e^+$ elements of $Y \cap Z$.
  \item The graph $\Gamma$ contains at most $m^-+1$ Delsarte cocliques. 
  Equality in this bound implies that a symmetric 
  $2$-$\left (\frac{(e^+)^2+ e^+e^-+e^+ - (e^-)^2}{(e^+)^2 + e^-}, \frac{(e^-+1)e^+}{(e^+)^2 + e^-}, \frac{-(e^+)^2 + e^+}{(e^+)^2 + e^-}\right)$ design exists
  and that every vertex of $\Gamma$ lies in exactly $1+\frac{(e^-+1)e^+}{(e^+)^2 + e^-}$ Delsarte cocliques.
  \item The common intersection of three Delsarte cocliques in $\Gamma$ has size $\frac{-(e^+)^2 + e^+}{(e^+)^2 + e^-}$.
  \item If $e^+$ and $e^-$ are coprime and $e^+ > 1$, then $\Gamma$ contains at most two Delsarte cocliques.
  \end{enumerate}
\end{theorem}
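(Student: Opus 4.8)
The plan is to combine the eigenspace description of Delsarte cocliques with the quasisymmetric design of Theorem~\ref{thm:quasisdesign}, and to read off every numerical value from the fact that once $ve^{-}=m^{-}(e^{-}-k)$ is imposed all of $v,k,\lambda,\mu,m^{\pm}$ become fixed rational functions of $e^{+},e^{-}$ (in particular $e^{\pm}\in\mathbb{Z}$, since the equality forces $v\notin\mathbb{Q}$ in the conference-graph case, and $e^{-}\le-2$ as $\Gamma$ is primitive). Recall that a coclique $X$ is Delsarte precisely when $w_{X}:=\chi_{X}-\tfrac{m^{-}}{v}j\in V^{-}$, and then $w_{X}$ is constant on $X$ and constant off $X$; in particular every vertex outside $X$ has exactly $-e^{-}$ neighbours in $X$. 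For distinct Delsarte cocliques $Y,Z$ the vector $w_{Y}-w_{Z}=\chi_{Y\setminus Z}-\chi_{Z\setminus Y}$ again lies in $V^{-}$, and combined with the previous remark this forces every vertex of $Y\setminus Z$ to have $-e^{-}$ neighbours in $Z\setminus Y$ and, since $Z$ is a coclique, $b_{z}\subseteq Y\setminus Z$ for each $z\in Z\setminus Y$.

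I would prove (a) and (b) together. The blocks $b_{z}$ ($z\in Z\setminus Y$) are $(-e^{-})$-subsets of $Y\setminus Z$; each point of $Y\setminus Z$ lies in exactly $-e^{-}$ of them; and any two of them meet in $-(e^{+})^{2}-e^{-}$ points, because $Z\setminus Y$ is a coclique and Theorem~\ref{thm:quasisdesign} gives the ``non-adjacent'' intersection number. Since $|Z\setminus Y|=|Y\setminus Z|$, the incidence matrix $N$ is square with $N^{\top}N=(e^{+})^{2}I+\bigl(-(e^{+})^{2}-e^{-}\bigr)J$; as all block sizes and replication numbers equal $-e^{-}$, the all-ones vector is an eigenvector of $N^{\top}N$ (and of $NN^{\top}$) with eigenvalue $(e^{-})^{2}$, which pins $|Z\setminus Y|=\tfrac{(e^{+})^{2}-(e^{-})^{2}}{(e^{+})^{2}+e^{-}}$, giving (b); and $N^{\top}N$ is invertible, so $NN^{\top}$ has the same shape and the configuration is the symmetric design of (a). For (c) the crucial step is a double count of $\sum_{z'\in Z\setminus Y}|b_{z}\cap b_{z'}|$ for a fixed $z\notin Y\cup Z$: counting through the points of $b_{z}$ gives $(-e^{-})\bigl(-e^{-}-a(z)\bigr)$ where $a(z):=|N(z)\cap(Y\cap Z)|$, while counting through pairs of blocks and inserting the two intersection numbers $s_{1},s_{2}$ of Theorem~\ref{thm:quasisdesign} together with $|N(z)\cap(Z\setminus Y)|=-e^{-}-a(z)$ gives $s_{1}\bigl(-e^{-}-a(z)\bigr)+s_{2}\bigl(|Z\setminus Y|+e^{-}+a(z)\bigr)$. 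Equating, the coefficient of $a(z)$ is $e^{-}+s_{1}-s_{2}=e^{-}-e^{+}\neq0$ and all else is independent of $z$, so $a(z)$ is constant; substituting $s_{1},s_{2}$ and the value from (b) simplifies it to $e^{+}$. (Alternatively: once $a(z)$ is constant, $A\chi_{Y\cap Z}$ is supported off $Y\cup Z$ and equals $a(z)(j-\chi_{Y\cup Z})$, so $(A-a(z)I)\chi_{Y\cap Z}=a(z)(j-\chi_{Y}-\chi_{Z})$, whose right-hand side has zero $V^{+}$-component whereas $\chi_{Y\cap Z}$ does not, forcing $a(z)=e^{+}$.)

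Part (e) is the same device one level up: for three Delsarte cocliques $Y,Z,W$, double count $\sum_{w\in W\setminus Y}|b_{w}\cap(Y\cap Z)|$, using (c) on the pair $(Y,Z)$ to see this summand equals $e^{+}$ when $w\notin Z$ and $0$ when $w\in Z$, and using (b) to know all pairwise intersections equal $c$; this expresses $|Y\cap Z\cap W|$ as a fixed function of $c,m^{-},e^{\pm}$ that simplifies to $\tfrac{-(e^{+})^{2}+e^{+}}{(e^{+})^{2}+e^{-}}$. For (d): by (b) the Gram matrix of all $\chi_{X}$ is $(m^{-}-c)I+cJ$, which is positive definite, so the $\chi_{X}$ are linearly independent, and being contained in $\langle j\rangle\oplus V^{-}$ there are at most $m^{-}+1$ of them. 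In the equality case they form a basis, so $j$ is a combination of them; by symmetry of the defining system all coefficients agree, whence $\sum_{X}\chi_{X}$ is a multiple of $j$, every vertex lies in the same number $\rho=\tfrac{(m^{-}+1)m^{-}}{v}=1+c$ of Delsarte cocliques, and $\sum_{X}w_{X}=0$, which makes $\sum_{X}\chi_{X}\chi_{X}^{\top}=\tfrac{(m^{-})^{2}(m^{-}+1)}{v^{2}}J+(m^{-}-c)E_{V^{-}}$ a polynomial in $A$; consequently the number of Delsarte cocliques through a fixed non-adjacent pair is a constant, so fixing one coclique $X_{0}$ and taking the blocks $X_{0}\cap X$ ($X\neq X_{0}$) on the point set $X_{0}$ yields a symmetric $2$-$(m^{-},c,t)$ design, which is the asserted design once $m^{-}$ is put in the stated form.

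Finally, for (f), suppose there are at least three Delsarte cocliques. From (b), $(e^{+})^{2}+e^{-}$ divides $e^{+}(e^{-}+1)$, and since $\gcd(e^{+},e^{-})=1$ forces $\gcd((e^{+})^{2}+e^{-},e^{+})=1$ we get $(e^{+})^{2}+e^{-}\mid e^{-}+1$; positivity of $c$ (recall $e^{-}\le-2$) then forces $(e^{+})^{2}+e^{-}<0$. From (e), $(e^{+})^{2}+e^{-}$ divides $e^{+}(e^{+}-1)$, hence $(e^{+})^{2}+e^{-}\mid e^{+}-1$, so $\bigl|(e^{+})^{2}+e^{-}\bigr|\le e^{+}-1$ because $e^{+}>1$; combining this with the explicit expressions for $k,\lambda,\mu$ in terms of $e^{\pm}$ and the requirement that these be non-negative integers yields a contradiction, so there are at most two Delsarte cocliques. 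The main obstacle is (c) — finding the double count that forces $|N(z)\cap(Y\cap Z)|$ to be constant, since (e), (b), and hence (f) all rest on it — with the final Diophantine elimination in (f) being the one remaining delicate point.
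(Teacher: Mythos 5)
Your proposal is correct in substance and, in several places, takes a genuinely different route from the paper. For (a) and (b) the paper double-counts triples and then invokes its Lemma \ref{lem:char_sym_design}, while you determine $|Z\setminus Y|$ from the eigenvalue of the all-ones vector for the block Gram matrix $N^{\top}N=(e^{+})^{2}I+\bigl(-(e^{+})^{2}-e^{-}\bigr)J$ and pass to $NN^{\top}$ via the square nonsingular incidence matrix; this is the same content in linear-algebra rather than counting form. The biggest divergence is (c): the paper deduces it by showing Hoffman equality for $Z\setminus Y$ inside the induced strongly regular graph $\Gamma'$ of Theorem \ref{thm:subsrg}, whereas your double count of $\sum_{z'}|b_{z}\cap b_{z'}|$ is elementary, avoids Theorem \ref{thm:subsrg} altogether, and correctly isolates $a(z)$ with nonzero coefficient $e^{-}-e^{+}$; I checked that $a(z)=e^{+}$ does satisfy the resulting equation. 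Part (e) is the same double count as the paper's. For the equality case of (d), the paper applies Lemma \ref{lem:char_sym_design} to the sets $Y_{1}\cap Y_{i}$ (using (e)) and then counts incidences; you instead use the basis/frame argument ($\sum_X w_X=0$, so $\sum_X\chi_X\chi_X^{\top}$ lies in the Bose--Mesner algebra), which yields the constant replication $1+c$ first and the symmetric design afterwards; this is valid (the $\lambda$ of the resulting square $2$-design is forced to be the value in (e) by $\tilde{k}(\tilde{k}-1)=\lambda(\tilde{v}-1)$, so your slight vagueness about ``$t$'' versus ``$t-1$'' is harmless), and it is a nice alternative in that it does not presuppose (e).

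The one place you only assert rather than prove is the endgame of (f). Your divisibility argument correctly gives $0<-\bigl((e^{+})^{2}+e^{-}\bigr)\le e^{+}-1$ when three Delsarte cocliques exist, but the claimed contradiction ``from non-negativity and integrality of $k,\lambda,\mu$'' is left unexecuted. It does go through: with $a=e^{+}$, $b=-e^{-}$ and $a^{2}<b\le a^{2}+a-1$ one computes $(a+1)\lambda=-a(a^{2}-1)+d(a^{2}-a-1)+d^{2}<0$ for $d=b-a^{2}\le a-1$, so $\lambda<0$. The paper's finish is a one-liner you already have available: the adjacent-pair intersection number $-(e^{+})^{2}-e^{+}-e^{-}$ of Theorem \ref{thm:quasisdesign} is a cardinality, hence non-negative, i.e.\ $-\bigl((e^{+})^{2}+e^{-}\bigr)\ge e^{+}$, directly contradicting your bound $\le e^{+}-1$. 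With that observation inserted, your proof is complete.
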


The main motivation for Theorem \ref{thm:symmdesign} is to bound the number of Delsarte cocliques in a strongly regular graph.
For most feasible parameter sets, the intersection numbers above are not integers. As an example, for a strongly regular graph with parameters
$v=287$, $k=126$, $\lambda=45$, and $\mu=63$, the intersection of three cocliques given by Part (e) is not an integer, therefore such a graph
can have at most two Delsarte cocliques. In cases like this where the number of Delsarte cocliques is bounded, if a graph exists and possesses a single Delsarte coclique then it must be very asymmetric. 

Notice that a strongly regular graph that has exactly one or two Delsarte cocliques
is not unusual. It can be easily verified that some strongly regular graphs with parameters 
$(45, 32, 22, 24)$ have this property.%
\footnote{See \url{http://www.maths.gla.ac.uk/~es/srgraphs.php} for a list of these strongly regular graphs.}

Theorem \ref{thm:symmdesign} also gives a straight-forward route of constructing such graphs
by extending a symmetric design with parameters as in Theorem \ref{thm:symmdesign}
to a quasisymmetric design as in Theorem \ref{thm:quasisdesign}.

This paper is organized as follows.
After proving Theorem \ref{thm:symmdesign}, we apply it with Theorem \ref{thm:quasisdesign}
to various infinite families of feasible parameter sets. We conclude by investigating all
feasible parameter sets for graphs with up to 1300 vertices by going through Brouwer's
database of strongly regular graphs\footnote{\url{https://www.win.tue.nl/~aeb/graphs/srg/srgtab.html}}.

\section{Preliminaries}\label{sec:pre}

\subsection{Block Designs}
For the following results, let $\scrB$ be a quasisymmetric $2$-$(\tilde{v}, \tilde{k}, \tilde{\lambda})$ design with intersection 
numbers $s_1$ and $s_2$.

\begin{theorem}[{Calderbank \cite[Theorem A]{Calderbank1987}}]\label{thm:q_eq_2_quasisymm}
  If $s_1 \equiv s_2 \mod{2}$ and $\tilde{r} \not\equiv \tilde{\lambda} \mod{4}$, then
  $\tilde{v} \equiv \pm1 \mod{8}$.
\end{theorem}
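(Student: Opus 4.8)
The plan is to attach a binary linear code to the design and reduce the whole problem to a divisibility statement about self-dual codes. Let $N$ be the block--point incidence matrix of $\scrB$ over $\bbF_2$, and let $C$ be its row space, i.e.\ the binary code spanned by the characteristic vectors of the blocks inside $\bbF_2^{\tilde{v}}$. The starting point is to read off the mod-$2$ Gram data of the block vectors directly from the design: the self-intersection of a block is $\tilde{k} \pmod 2$, while the intersection of two distinct blocks is $s_1$ or $s_2$, which the hypothesis $s_1 \equiv s_2 \pmod 2$ collapses to a single value $s \pmod 2$. Thus over $\bbF_2$ every off-diagonal entry of $N N^{\mathsf T}$ equals $s$, so all the nontrivial inner products among block vectors coincide. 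This rigidity is exactly what one needs to force $C$, after a small correction, to be self-orthogonal.

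Next I would pin down the parities of $\tilde{k}$ and $s$ from the design arithmetic and, where necessary, augment the code so that it becomes self-orthogonal and doubly-even. Using $\tilde{r}(\tilde{k}-1) = (\tilde{v}-1)\tilde{\lambda}$ together with the standard quasisymmetric relations expressing $s_1+s_2$ and $s_1 s_2$ in terms of $\tilde{r}, \tilde{\lambda}, \tilde{k}$, one can determine $\tilde{k} \pmod 2$ and $s \pmod 2$ and, crucially, control codeword weights modulo $4$. When a block vector has odd weight I would adjoin a single overall parity coordinate to every block, passing to an ambient length of $\tilde{v}+1$; the two cases $\tilde{v}\equiv +1$ and $\tilde{v}\equiv -1 \pmod 8$ then arise according to whether the correct self-dual ambient code has length $\tilde{v}-1$ or $\tilde{v}+1$. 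The role of the hypothesis $\tilde{r}\not\equiv\tilde{\lambda}\pmod 4$ is to upgrade ``all weights even'' to ``all weights divisible by $4$'', i.e.\ to make the relevant code not merely self-orthogonal but doubly-even.

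With the augmented code $\widetilde{C}$ in hand, I would verify that it is genuinely self-dual rather than just self-orthogonal, by computing $\dim \widetilde{C} = \rank_{\bbF_2} N$ (plus the augmentation) and checking that it equals half the length. Here the $2$-design equation $N^{\mathsf T} N = (\tilde{r}-\tilde{\lambda}) I + \tilde{\lambda} J$, reduced mod $2$, bounds $\rank_{\bbF_2} N$ from below, and the congruence $\tilde{r}\not\equiv\tilde{\lambda}\pmod 4$ forces this bound to be as large as self-duality demands. Once $\widetilde{C}$ is established as a doubly-even self-dual binary code, the classical divisibility theorem (a consequence of Gleason's theorem on the weight enumerators of self-dual codes) guarantees that its length is a multiple of $8$. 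Since that length is $\tilde{v}\pm 1$, we obtain $\tilde{v}\equiv \pm 1 \pmod 8$.

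The main obstacle is the middle step: verifying self-duality and double-evenness with full precision. The mod-$2$ collapse of the intersection numbers and the resulting self-orthogonality are immediate, but forcing $\rank_{\bbF_2} N$ to be exactly half the length, so that $\widetilde{C} = \widetilde{C}^{\perp}$, and confirming that every weight is divisible by $4$ rather than only by $2$, both hinge on careful bookkeeping of $\tilde{k}$, $s_1 s_2$, $\tilde{r}$, and $\tilde{\lambda}$ modulo $4$. This is precisely where $\tilde{r}\not\equiv\tilde{\lambda}\pmod 4$ must be used at full strength, and isolating the correct augmentation in each parity case is the delicate part of the argument.
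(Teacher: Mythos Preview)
The paper does not give its own proof of this statement; it is quoted from Calderbank~\cite{Calderbank1987} and used as a black box. So there is no ``paper's proof'' to compare against here---what you have sketched is, in outline, precisely Calderbank's original argument: pass to the binary code spanned by block characteristic vectors, use $s_1\equiv s_2\pmod 2$ to make $NN^{\mathsf T}$ uniform off the diagonal, adjust by a parity coordinate as needed to obtain a doubly-even self-orthogonal code, and then invoke Gleason's theorem on doubly-even self-dual codes to force the length into $8\mathbb{Z}$, hence $\tilde v\equiv\pm1\pmod 8$.

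Your self-diagnosed ``main obstacle'' is real and is exactly where the work lies in Calderbank's paper. Two cautions. First, the lower bound on $\rank_{\bbF_2}N$ you propose via $N^{\mathsf T}N=(\tilde r-\tilde\lambda)I+\tilde\lambda J$ reduced mod~$2$ collapses when $\tilde r-\tilde\lambda\equiv 2\pmod 4$, since then $N^{\mathsf T}N\equiv\tilde\lambda J\pmod 2$ has $2$-rank at most~$1$; so the hypothesis $\tilde r\not\equiv\tilde\lambda\pmod 4$ cannot be cashed in simply by reducing that Gram identity mod~$2$. One really needs a mod-$4$ (or Smith normal form) analysis to pin down the $2$-rank, which is why the companion paper~\cite{Blokhuis1992} is framed in those terms. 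Second, doubly-even self-orthogonality alone does not force length $\equiv 0\pmod 8$ (e.g.\ the $[4,1]$ repetition code), so one cannot skip the step of proving the augmented code is genuinely self-dual, or equivalently that it extends to a doubly-even self-dual code of the same length. Calderbank handles this by a case split on the parities of $\tilde k$ and $s$ together with the mod-$4$ bookkeeping you allude to; filling in those cases is the substance of the proof, and your sketch stops just short of it.
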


\begin{theorem}[{Blokhuis and Calderbank \cite[Theorem 4.3]{Blokhuis1992}}]\label{thm:q_neq_2_p_only_quasisymm}
  If $s_1 \equiv s_2 \equiv s \mod{p}$ for some odd prime number $p$ and $\tilde{r} \not\equiv \tilde{\lambda} \mod{p^2}$,
  then one of the following occurs:
  \begin{enumerate}[(a)]
   \item $\tilde{v}$ is even, $\tilde{v} \equiv s \equiv 0 \mod{p}$ and $(-1)^{\tilde{v}/2}$ is a square modulo $p$,
   \item $\tilde{v}$ is even, $\tilde{v} \not\equiv s \not\equiv 0 \mod{p}$, 
	  $\tilde{\lambda} \equiv 0 \mod{p}$ and $(-1)^{\frac{\tilde{v}+2}{2}}\tilde{k} (\tilde{v}-\tilde{k})$ is a square modulo $p$,
   \item $\tilde{v}$ is odd, $\tilde{v} \not\equiv s \equiv 0 \mod{p}$, $\tilde{\lambda} \equiv 0 \mod{p}$, 
	  and $-\tilde{v} (-1)^{\frac{\tilde{v}+1}{2}}$ is a square modulo $p$, or
   \item $\tilde{v}$ is odd, $\tilde{v} \equiv s \not\equiv 0 \mod{p}$,  
	  and $-s (-1)^{\frac{\tilde{v}+1}{2}}$ is a square modulo $p$.
  \end{enumerate}
\end{theorem}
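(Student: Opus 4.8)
Since Theorem \ref{thm:q_neq_2_p_only_quasisymm} is quoted from Blokhuis and Calderbank, I only outline the strategy by which I would recover it. The plan is to convert the existence of the design into an embedding of a rational quadratic form into a standard one and then to isolate a local obstruction at the prime $p$. Let $N$ be the $\tilde v \times b$ point--block incidence matrix. The $2$-design condition gives the point-side identity $NN^T = (\tilde r - \tilde\lambda) I + \tilde\lambda J$, whose eigenvalues are $\tilde r\tilde k$ (once, on $\langle j\rangle$) and $\tilde r - \tilde\lambda$ (with multiplicity $\tilde v - 1$); in particular $\det(NN^T) = \tilde r\tilde k\,(\tilde r - \tilde\lambda)^{\tilde v -1}$. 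Quasisymmetry gives the block-side matrix $N^T N$, with diagonal $\tilde k$ and off-diagonal entries in $\{s_1, s_2\}$. The first step is to read off what the hypotheses say modulo $p$. Reducing $N^T N$ and using $s_1 \equiv s_2 \equiv s \mod{p}$ collapses all off-diagonal entries, so that $N^T N \equiv (\tilde k - s)I + sJ \mod{p}$; comparing this with the spectral decomposition of $N^T N$ (which, since $b > \tilde v$ for a quasisymmetric design, has eigenvalue $0$ on part of $\langle j\rangle^\perp$) forces $\tilde k \equiv s$ and $\tilde r \equiv \tilde\lambda \mod{p}$. The second hypothesis $\tilde r \not\equiv \tilde\lambda \mod{p^2}$ then pins the $p$-adic valuation of $\tilde r - \tilde\lambda$ to be exactly $1$, which is precisely the borderline degeneracy that makes a local invariant visible.

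Next I would pass to $\mathbb{Z}_p$ and $\mathbb{Q}_p$. The rows of $N$ realize $NN^T$ as the Gram matrix of $\tilde v$ integer vectors inside $(\mathbb{Z}^b, I_b)$, so $NN^T$ is represented by the standard form $I_b$ over $\mathbb{Z}$, hence over every $\mathbb{Z}_p$. Because $v_p(\tilde r - \tilde\lambda) = 1$, on $\langle j\rangle^\perp$ the form $NN^T$ equals $(\tilde r - \tilde\lambda)I$, so it splits over $\mathbb{Z}_p$ as a rank-one form of discriminant $\tilde r\tilde k$ together with $p$ times a \emph{unimodular} form of rank $\tilde v - 1$ whose discriminant square class is governed by $(\tilde r - \tilde\lambda)/p$. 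The key principle, via Hasse--Minkowski and the product formula for Hilbert symbols (equivalently, via the Witt type of the $\bbF_p$-reduction together with the Smith normal form controlling the elementary divisors divisible by $p$), is that $NN^T$ can be represented by the unimodular form $I_b$ over $\mathbb{Q}_p$ only if its local Hasse invariant at $p$ takes a specific value. That value is detected by whether a certain quantity built from the discriminant is a quadratic residue modulo $p$, and the normalizing sign $(-1)^{\lfloor \tilde v/2\rfloor}$ appears as the Hilbert symbol of the hyperbolic (or nearly hyperbolic) part of the form in dimension $\tilde v$.

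The four alternatives (a)--(d) would then emerge as the four regimes in which this discriminant computation must be carried out. The parity of $\tilde v$ splits the analysis because the maximal Witt index, and hence the relevant sign $(-1)^{\tilde v/2}$ versus $(-1)^{(\tilde v +1)/2}$, depends on whether the ambient dimension is even or odd. Within each parity, the cases further split according to whether $p \mid s$ (equivalently, via $\tilde k \equiv s$, whether $p \mid \tilde k$) and whether $p \mid \tilde v$; these divisibilities change the rank and the discriminant of $(\tilde k - s)I + sJ$ over $\bbF_p$ through the single extra eigenvalue $\tilde k - s + \tilde v s$ on $\langle j\rangle$, and they dictate which of $\tilde k(\tilde v - \tilde k)$, $\tilde v$, or $s$ carries the residue condition and whether the auxiliary congruence $\tilde\lambda \equiv 0 \mod{p}$ is forced. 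I expect the main obstacle to be exactly this local bookkeeping at $p$: one must track the square class of the discriminant and the Hasse invariant uniformly across the four regimes, correctly accounting for the rank-one perturbation by $J$ and for the single factor of $p$ isolated by the $p^2$-hypothesis. Everything away from $p$ is handled in bulk by the product formula, so the entire content of the theorem is concentrated in this one $p$-adic computation.
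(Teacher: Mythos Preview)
The paper does not prove Theorem~\ref{thm:q_neq_2_p_only_quasisymm}; it is quoted verbatim from Blokhuis and Calderbank~\cite[Theorem~4.3]{Blokhuis1992} and used as a black box, so there is no in-paper argument to compare your proposal against. Your outline is, in broad strokes, a faithful reconstruction of the original Blokhuis--Calderbank strategy (their title already advertises the Smith normal form, and the argument does proceed by realizing $NN^T$ inside the standard form $I_b$ and extracting a local obstruction at $p$ from the quadratic form data once $v_p(\tilde r-\tilde\lambda)=1$). The deductions $\tilde k\equiv s$ and $\tilde r\equiv\tilde\lambda\pmod{p}$ from the rank of $N^TN$ over $\bbF_p$, and the case split on the parity of $\tilde v$ and on $p\mid s$, $p\mid\tilde v$, are all in line with how the four alternatives arise there. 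The only caveat is that your sketch remains programmatic at exactly the point you flag yourself: the precise Hasse-invariant/discriminant bookkeeping that yields each of the four specific residue conditions is where all the work lies, and you have not carried it out. As a summary of the method this is accurate; as a proof it is an outline, which is appropriate given that the present paper treats the result as imported.
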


\begin{theorem}[{Blokhuis and Calderbank \cite[Theorem 5.1]{Blokhuis1992}}]\label{thm:q_neq_2_quasisymm}
  Let $p$ be an odd prime number and let $e$ be an odd positive integer. For an integer $z$ define
  $\psi(z) = \max\{ \ell: p^\ell \text{ divides } z, \ell \leq e \}$.
  If $s_1 \equiv s_2 \equiv s \mod{p^e}$, $\tilde{r} \not\equiv \tilde{\lambda} \mod{p^{e+1}}$ and $\tilde{v}$ is odd, then one of the following occurs:
  \begin{enumerate}[(a)]
   \item $\psi(s)$ is odd and $(-1)^{\frac{\tilde{v}-1}{2}} \tau$ is a square modulo $p$,
      where $\tilde{v} - s = p^{\psi(v-s)} \tau$, or
   \item $\psi(s)$ is even and $(-1)^{\frac{\tilde{v}-1}{2}} \sigma$ is a square modulo $p$,
      where $s = p^{\psi(s)} \sigma$.
  \end{enumerate}
\end{theorem}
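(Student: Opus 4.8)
The plan is to translate the existence of such a quasisymmetric design into a solvability (isotropy) condition for a rational quadratic form, and then to read off the dichotomy (a)/(b) by localizing that condition at the prime $p$. Let $N$ be the $\tilde{v} \times b$ incidence matrix of $\scrB$ (rows indexed by the $\tilde{v}$ points, columns by the $b$ blocks), and recall the two fundamental identities
\begin{equation*}
  N N^T = (\tilde{r} - \tilde{\lambda}) I + \tilde{\lambda} J, \qquad
  N^T N = (\tilde{k} - s_2) I + (s_1 - s_2) A + s_2 J,
\end{equation*}
where $A$ is the $0/1$ adjacency matrix of the block graph (two blocks adjacent when they meet in $s_1$ points). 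The hypothesis $s_1 \equiv s_2 \equiv s \mod{p^e}$ collapses the middle term, so that $N^T N \equiv (\tilde{k} - s) I + s J \mod{p^e}$, while $\tilde{r} \not\equiv \tilde{\lambda} \mod{p^{e+1}}$ forces $v_p(\tilde{r} - \tilde{\lambda}) \le e$ and so controls the $p$-adic valuation of the nonzero eigenvalues of $N N^T$.

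First I would package the integrality of $N$ as a statement about quadratic forms over $\mathbb{Q}$. The identity $f(x) := x^T (N N^T) x = |N^T x|^2 = \sum_j (N^T x)_j^2$ exhibits the nonsingular form $f$ on $\mathbb{Q}^{\tilde{v}}$ (nonsingular because $\det(N N^T) = (\tilde{r} - \tilde{\lambda})^{\tilde{v}-1}\tilde{r}\tilde{k} \neq 0$) as the restriction of the standard sum-of-$b$-squares form to the rational column space of $N^T$. Exactly as in the Bruck--Ryser--Chowla argument, Witt's theory then constrains the local Hasse--Witt invariants and determinant of $f$, and the global product formula $\prod_\ell (a,b)_\ell = 1$ for Hilbert symbols ties these together with the sign at infinity, which contributes the factor $(-1)^{(\tilde{v}-1)/2}$ since $\tilde{v}$ is odd. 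The congruence hypotheses are precisely what isolate the single prime $p$ at which the resulting constraint must be checked.

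Next I would localize at $p$ and compute the $p$-adic invariants of $f$. Splitting off the all-ones eigenvector of $N N^T$ gives an orthogonal decomposition $f \cong \langle \tilde{v}\,\tilde{r}\,\tilde{k}\rangle \oplus (\tilde{r} - \tilde{\lambda})\cdot g$, where $g$ is an even-dimensional form (dimension $\tilde{v}-1$) whose Hasse factor is explicitly computable; the even dimension is what keeps the power $(\tilde{r}-\tilde{\lambda})^{\tilde{v}-1}$ out of the decisive square class. Substituting the standard quasisymmetric relations linking $\tilde{r}, \tilde{\lambda}, \tilde{k}$ to $s_1, s_2$ and reducing modulo squares using $s_1 \equiv s_2 \equiv s \mod{p^e}$ rewrites the surviving $p$-adic data in terms of $s$ and $\tilde{v} - s$. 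Here the capped valuation $\psi$ is forced upon us: we control the matrices only modulo $p^e$, so valuations are meaningful only up to the cap $e$, and $\tilde{r} \not\equiv \tilde{\lambda} \mod{p^{e+1}}$ guarantees the cap is not reached in the decisive term. Writing $\tilde{v} - s = p^{\psi(\tilde{v}-s)}\tau$ and $s = p^{\psi(s)}\sigma$, the parity of $\psi(s)$ decides whether the odd-power-of-$p$ coordinate in the $p$-adic diagonalization carries $\tau$ or $\sigma$, and evaluating the Hilbert symbol $(\,\cdot\,,\,\cdot\,)_p$ there yields exactly the requirement that $(-1)^{(\tilde{v}-1)/2}\tau$ (case $\psi(s)$ odd) or $(-1)^{(\tilde{v}-1)/2}\sigma$ (case $\psi(s)$ even) be a square modulo $p$.

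The main obstacle is the $p$-adic bookkeeping in this last step, for which the Smith normal form of $N$ over $\mathbb{Z}_p$ is the natural organizing device. Two points need genuine care: separating the nondegenerate $\tilde{v}$-dimensional form from the $(b-\tilde{v})$-dimensional radical of $N^T N$, so that the invariant is read off on the correct space; and tracking how the rank-one $J$-perturbation interacts with the diagonal part once $v_p(s)$ may approach the cap $e$. Confirming that the parity of $\psi(s)$ lines up with the correct quadratic-residue symbol, and that no additional case splitting (of the kind appearing for even $\tilde{v}$ in Theorem \ref{thm:q_neq_2_p_only_quasisymm}) is forced when $\tilde{v}$ is odd, is where the real work lies; everything preceding it is a faithful adaptation of the classical Bruck--Ryser--Chowla machinery.
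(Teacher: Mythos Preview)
The paper does not prove this theorem at all: it is quoted verbatim as a preliminary result with the citation \cite[Theorem 5.1]{Blokhuis1992} and is used only as a black box (in Theorem~\ref{thm:gqnonexist} and Theorem~\ref{thm:taylor_ex_cond}). There is therefore no ``paper's own proof'' to compare your proposal against.

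For what it is worth, your outline is faithful to the spirit of the original Blokhuis--Calderbank argument: one does encode the incidence matrix via $NN^T$ and $N^TN$, pass to rational quadratic forms, and extract a local obstruction at $p$ through Hasse/Hilbert-symbol calculations, with the Smith normal form of $N$ over $\mathbb{Z}_p$ organizing the $p$-adic valuations. Your sketch correctly identifies where the actual work lies (the $p$-adic bookkeeping separating the cases by the parity of $\psi(s)$), and you are candid that this step is not carried out. As a proof \emph{proposal} this is accurate in its broad strokes, but as written it is a plan rather than a proof: the decisive Hilbert-symbol computation that produces exactly the quantities $(-1)^{(\tilde{v}-1)/2}\tau$ and $(-1)^{(\tilde{v}-1)/2}\sigma$ is asserted, not derived. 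If you intend this to stand on its own, that computation must be made explicit.
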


The following lemma is surely known, but we do not know a reference. 
We include a short proof for the sake of completeness.

\begin{lemma}\label{lem:char_sym_design}
  Let $\scrB$ be a set of $\tilde{k}$-sets of $\{ 1, \ldots, \tilde{v} \}$ with
  $|\scrB| = \tilde{v}$ and suppose that there is a constant $s$ such that 
  $|b \cap b'| \in \{ s, \tilde{k} \}$ for all $b, b' \in \scrB$. Then $\scrB$
  is a symmetric $2$-$(\tilde{v}, \tilde{k}, s)$ design if and only 
  if $\tilde{k}(\tilde{k}-1) = s(\tilde{v}-1)$.
\end{lemma}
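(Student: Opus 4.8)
The plan is to prove the two implications separately; the ``only if'' direction is immediate and the ``if'' direction carries all the content.

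For ``only if'': if $\scrB$ is a symmetric $2$-$(\tilde v, \tilde k, s)$ design, then $\tilde r = \tilde k$ by the equivalence recorded just before the lemma, and substituting $\tilde r = \tilde k$ and $\tilde\lambda = s$ into the basic identity $\tilde r(\tilde k - 1) = (\tilde v - 1)\tilde\lambda$ gives $\tilde k(\tilde k - 1) = s(\tilde v - 1)$. Nothing more is needed here.

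For ``if'', assume $\tilde k(\tilde k - 1) = s(\tilde v - 1)$. I would first dispose of the degenerate cases ($\tilde v \le 1$, or $s = \tilde k$, which under $|\scrB| = \tilde v \ge 2$ cannot occur since distinct $\tilde k$-subsets meet in fewer than $\tilde k$ points) and thereby note that every two distinct blocks of $\scrB$ meet in exactly $s$ points. Writing $\lambda_{ij}$ for the number of blocks containing two given distinct points $i,j$, I would record the two double-counting identities $\sum_{\{i,j\}} \lambda_{ij} = \tilde v \binom{\tilde k}{2}$ (each block accounts for $\binom{\tilde k}{2}$ point-pairs) and $\sum_{\{i,j\}} \binom{\lambda_{ij}}{2} = \binom{\tilde v}{2}\binom{s}{2}$ (each of the $\binom{\tilde v}{2}$ pairs of distinct blocks accounts for $\binom{s}{2}$ point-pairs). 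The first identity says the average of the $\lambda_{ij}$ over the $\binom{\tilde v}{2}$ point-pairs equals $\tilde k(\tilde k-1)/(\tilde v-1)$, which is exactly $s$ by hypothesis; since $x \mapsto \binom{x}{2}$ is strictly convex, Jensen's inequality gives $\sum_{\{i,j\}} \binom{\lambda_{ij}}{2} \ge \binom{\tilde v}{2}\binom{s}{2}$ with equality if and only if all $\lambda_{ij}$ equal $s$, and the second identity shows equality holds. Hence every pair of distinct points lies in exactly $s$ blocks, so $\scrB$ is a $2$-$(\tilde v, \tilde k, s)$ design, and it is symmetric because $|\scrB| = \tilde v$; the constant replication number $\tilde k$ then follows as well.

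The proof is short, and I do not expect a real obstacle: the only thing requiring attention is the observation that the numerical hypothesis is precisely the equality condition in the convexity estimate — it pins the mean of the $\lambda_{ij}$ to the integer $s$ — together with a line ruling out degenerate parameters. If one prefers linear algebra to convexity, an equivalent route is to use the $\tilde v \times \tilde v$ point--block incidence matrix $N$: the intersection hypothesis gives $N^{T} N = (\tilde k - s) I + s J$, whose determinant simplifies under $\tilde k(\tilde k-1) = s(\tilde v - 1)$ to $\tilde k^{2}(\tilde k - s)^{\tilde v - 1} \neq 0$, so $N$ is invertible; then $N^{T} j = \tilde k j$ and $N^{T}(N j) = (N^{T}N)j = \tilde k^{2} j = N^{T}(\tilde k j)$ force $N j = \tilde k j$, so all row sums (replication numbers) equal $\tilde k$ and $\langle j \rangle$ is the eigenspace of the simple eigenvalue $\tilde k^{2}$ of the symmetric matrix $N N^{T}$, whence $N N^{T} = (\tilde k - s) I + s J$ and the symmetric design property follows directly.
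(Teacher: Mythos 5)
Your proof is correct and takes essentially the same approach as the paper: your Jensen/convexity step on the pair counts $\lambda_{ij}$ is exactly the paper's variance identity $\sum_i (s-i)^2 r_i = 0$, obtained from the same two double counts of point-pairs against blocks and pairs of blocks, and your ``only if'' direction is the same use of $\tilde{r}(\tilde{k}-1)=(\tilde{v}-1)\tilde{\lambda}$ with $\tilde{r}=\tilde{k}$. The only differences are organizational: the paper establishes the constant replication number $\tilde{k}$ by a separate variance count on points (which simultaneously yields the equivalence with $\tilde{k}(\tilde{k}-1)=s(\tilde{v}-1)$), whereas you deduce it from the $2$-design property, and your incidence-matrix sketch is a standard Ryser-style alternative not used in the paper.
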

\begin{proof}
  We verify that (i) each element of $M := \{ 1, \ldots, \tilde{v} \}$ lies on $\tilde{k}$
  elements of $\scrB$ and that (ii) each pair of $M$ lies on 
  $s$ elements of $\scrB$.
  
  For (i) let $\ell_i$ denote the number of elements in $M$ that lies in exactly 
  $i$ elements of $\scrB$. Standard counting arguments show that  
  \begin{align*}
    &\sum \ell_i = \tilde{v}, && \sum i \ell_i = \tilde{v}\tilde{k}, && \sum i(i-1)\ell_i = \tilde{v}(\tilde{v}-1) s.
  \end{align*}
  Thus,  
  \begin{align}
    0 \leq \sum (\tilde{k}-i)^2 \ell_i = \tilde{v}((\tilde{v}-1)s + \tilde{k}) - \tilde{v}\tilde{k}^2.\label{eq:variance_cnt}
  \end{align}
  Hence, each element of $M$ lies in exactly $\tilde{k}$ elements of $\scrB$ if and
  only if we have equality in \eqref{eq:variance_cnt}, which occurs if and only if $\tilde{k}(\tilde{k}-1) = (\tilde{v}-1)s$.
  
  For (ii) let $r_i$ denote the number of ordered pairs of elements in $M$ that lie in $i$ elements of $\scrB$.
  Notice that we can assume $\tilde{k}(\tilde{k}-1) = (\tilde{v}-1)s$.
  As before we obtain
  \begin{align*}
    &\sum r_i = \tilde{v}(\tilde{v}-1), \\
    &\sum i r_i = \tilde{v}\tilde{k}(\tilde{k}-1) = \tilde{v}(\tilde{v}-1)s,\\
    & \sum i(i-1)r_i = \tilde{v}(\tilde{v}-1) s (s-1).
  \end{align*}
  From this we obtain $\sum (s-i)^2 r_i = 0$. Hence, each pair of $M$ lies in 
  exactly $s$ elements of $\scrB$.
\end{proof}

\subsection{Strongly Regular Graphs}
For a strongly regular graph $\Gamma$, if $\mu > 0$, then the following equations are well-known \cite[Theorem 1.3.1]{Brouwer1989}:
\begin{align}
  &e^+e^- = \mu - k, && v = \frac{(k-e^+)(k-e^-)}{\mu}, && e^++e^- = \lambda - \mu,\label{eq:srg_ev_identities1}\\
  &m^+ = \frac{(e^-+1)k(k-e^-)}{\mu (e^- - e^+)}, && m^- = v - 1 - m^+.\label{eq:srg_ev_identities2}
\end{align}

We are only considering graphs with $ve^- = m^-(e^--k)$. Together with \eqref{eq:srg_ev_identities1}
and \eqref{eq:srg_ev_identities2}, we have
\begin{align}
 & k = \frac{(e^-)^2 - e^-e^+}{e^++1}, && \mu = \frac{e^-(e^+)^2 +
   (e^-)^2}{e^++1}, &&  m^- = \frac{(e^+)^2+e^- e^++e^+ -
    (e^-)^{2}}{(e^+)^{2}+e^-},\label{eq:srg_ev_identities3}
\end{align}
\begin{align}
 &v = \frac{(2e^+-e^-+1)((e^+)^2+e^-e^++e^+-(e^-)^2)}{(e^++1)((e^+)^2+e^-)}.\label{eq:srg_ev_identities4}
\end{align}

\begin{theorem}[{\cite[Proposition 1.3.2]{Brouwer1989}}]\label{thm:hoffmans_bnd}
  Let $\Gamma$ be a strongly regular graph. If $Y$ is a coclique of $\Gamma$, then
  the following statements are equivalent.
  \begin{enumerate}[(a)]
   \item $|Y|(e^- - k) = ve^-$.
   \item Every vertex not in $Y$ has exactly $-e^-$ neighbors in $Y$.
   \item The characteristic vector $\chi$ of $Y$ lies in $\langle j \rangle + V^-$.
  \end{enumerate}
\end{theorem}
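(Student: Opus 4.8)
The plan is to prove the cycle of implications $(a)\Rightarrow(c)\Rightarrow(b)\Rightarrow(a)$, working throughout with the orthogonal eigenspace decomposition $\mathbb{R}^v = \langle j\rangle \oplus V^+ \oplus V^-$ of the adjacency matrix $A$. I would write the characteristic vector of $Y$ as $\chi = cj + u^+ + u^-$ with $u^\pm \in V^\pm$ and $c = \frac{j^T\chi}{j^Tj} = \frac{|Y|}{v}$, and record at the outset the bookkeeping identities $\|\chi\|^2 = |Y|$ and $j^T\chi = |Y|$, together with the coclique condition $\chi^T A\chi = 0$ (no edges lie inside $Y$). Note also that $Y$ is assumed nonempty; this is used to divide by $|Y|$, and without it $(c)$ holds trivially while $(a)$ fails.

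The implication $(a)\Rightarrow(c)$ is the heart of the matter, and the main obstacle is controlling the signs in the relevant algebraic identity. Using $A\chi = ckj + e^+u^+ + e^- u^-$ and orthogonality of the three summands, the coclique condition expands to
\[
0 = c^2 k v + e^+\|u^+\|^2 + e^-\|u^-\|^2.
\]
I would then use $\|u^+\|^2 + \|u^-\|^2 = |Y| - c^2 v$ to eliminate $\|u^-\|^2$ and substitute $c = |Y|/v$; after multiplying through by $v/|Y|$ this rearranges into
\[
0 = |Y|(k - e^-) + e^- v + (e^+ - e^-)\tfrac{v}{|Y|}\|u^+\|^2.
\]
Hypothesis $(a)$ is precisely the statement that $|Y|(k - e^-) + e^- v = 0$, so the identity collapses to $(e^+ - e^-)\tfrac{v}{|Y|}\|u^+\|^2 = 0$. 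Since $e^+$ is strictly larger than the least eigenvalue $e^-$ and $|Y| > 0$, this forces $u^+ = 0$, i.e. $\chi \in \langle j\rangle + V^-$, which is $(c)$. The only delicate point is exactly this sign argument: the leftover coefficient $e^+ - e^-$ must be nonzero of a definite sign for a single nonnegative term to be killed.

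For $(c)\Rightarrow(b)$ I would first observe that $(c)$ together with the coclique condition recovers $(a)$: with $\chi = cj + u^-$, the two equations $0 = c^2kv + e^-\|u^-\|^2$ and $|Y| = c^2 v + \|u^-\|^2$ give $|Y| = \frac{ve^-}{e^- - k}$ after dividing by $|Y|$, hence $c = \frac{e^-}{e^- - k}$. Then $A\chi = ckj + e^- u^- = e^-\chi + c(k - e^-)j$, so for any vertex $x\notin Y$ the entry $(A\chi)_x$, which counts the neighbours of $x$ in $Y$, equals $c(k - e^-) = -e^-$; this is $(b)$. Finally, for $(b)\Rightarrow(a)$, condition $(b)$ says $A\chi = -e^-(j - \chi)$ (the vector that is $0$ on $Y$ and $-e^-$ off $Y$); left-multiplying by $j^T$ and using $j^TA = kj^T$, $j^T\chi = |Y|$, and $j^Tj = v$ yields $k|Y| = -e^-(v - |Y|)$, which is exactly $|Y|(e^- - k) = ve^-$. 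This closes the cycle and establishes the equivalence of all three statements.
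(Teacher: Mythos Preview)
The paper does not give its own proof of this statement; it is quoted from \cite[Proposition 1.3.2]{Brouwer1989} without argument, so there is nothing in the paper to compare against. Your proof is correct and is essentially the standard spectral argument: decompose $\chi$ orthogonally across the three eigenspaces, use the coclique identity $\chi^T A\chi = 0$ together with $\|\chi\|^2 = |Y|$ to isolate the $V^+$ component, and exploit $e^+ > e^-$ to kill it. The computations in all three implications check out, including the identity $A\chi = e^-\chi + c(k-e^-)j$ used for $(c)\Rightarrow(b)$ and the averaging step $j^T A\chi = k|Y|$ for $(b)\Rightarrow(a)$. Your observation that the equivalence silently requires $Y\neq\emptyset$ is also apt: for $Y=\emptyset$ condition $(c)$ holds trivially while $(a)$ and $(b)$ fail.
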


Note that the coclique $Y$ in Theorem \ref{thm:hoffmans_bnd} is a Delsarte coclique.

\begin{theorem}[{Theorem 9.4.1 \cite{Brouwer2012}}]\label{thm:subsrg}
  Let $\Gamma$ be a primitive strongly regular graph with 
  $ve^- = m^-(e^- - k)$ and let $Y$ be a Delsarte coclique. Then the subgraph $\Gamma'$ of $\Gamma$ induced by the vertices
  not in $Y$ is strongly regular with degree $k'=k+e^-$, positive eigenvalues $k'$ and $e^+$,
   and negative eigenvalue $e^++e^-$.
\end{theorem}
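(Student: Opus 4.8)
The plan is to analyze the $\{Y,\overline{Y}\}$-partition of the adjacency matrix and to read off the spectrum of $\Gamma'$ from two operator identities. Order the vertices so that $Y$ comes first and write
$$A = \begin{pmatrix} 0 & N \\ N^{T} & A' \end{pmatrix},$$
where $A'$ is the adjacency matrix of $\Gamma'$ and $N$ is the $Y\times\overline{Y}$ block recording edges between $Y$ and $\overline{Y}$. Since $Y$ is a coclique, every $y\in Y$ has all $k$ of its neighbours in $\overline{Y}$, so $N j_{\overline{Y}}=k\,j_{Y}$; by Theorem \ref{thm:hoffmans_bnd}(b) every $z\in\overline{Y}$ has exactly $-e^{-}$ neighbours in $Y$, so $N^{T}j_{Y}=-e^{-}j_{\overline{Y}}$ and hence $z$ has $k+e^{-}$ neighbours inside $\overline{Y}$. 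Thus $\Gamma'$ is regular of degree $k'=k+e^{-}$ and $j_{\overline{Y}}$ is an eigenvector of $A'$ for $k'$; this settles the degree and the top eigenvalue.

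Next I would extract the rest of the spectrum. Two distinct vertices of $Y$ are non-adjacent and share exactly $\mu$ common neighbours, all in $\overline{Y}$, so counting diagonal entries and using $k-\mu=-e^{+}e^{-}$ from \eqref{eq:srg_ev_identities1} gives $NN^{T}=\mu J_{Y}-e^{+}e^{-}I_{Y}$. Hence $NN^{T}$ is nonsingular with eigenvalues $-ke^{-}$ (on $j_{Y}$) and $-e^{+}e^{-}$ (multiplicity $m^{-}-1$); in particular $N$ has full row rank $m^{-}$, so $M:=N^{T}N$ has eigenvalues $-ke^{-}$ on $j_{\overline{Y}}$, $-e^{+}e^{-}$ with multiplicity $m^{-}-1$, and $0$ with multiplicity $v-2m^{-}$. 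Squaring $A$ and using $A^{2}=\mu J+(e^{+}+e^{-})A-e^{+}e^{-}I$ blockwise, the bottom‑right block yields $(A'-e^{+}I)(A'-e^{-}I)=\mu J_{\overline{Y}}-M$, and left–multiplying the top‑right block relation $NA'=\mu J_{Y\overline{Y}}+(e^{+}+e^{-})N$ by $N^{T}$ (using $N^{T}j_{Y}=-e^{-}j_{\overline{Y}}$) yields $MA'=(e^{+}+e^{-})M-e^{-}\mu J_{\overline{Y}}$. In particular $M$ and $A'$ commute, so they admit a common eigenbasis.

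I would then decompose $\mathbb{R}^{\overline{Y}}$ into the three $M$-eigenspaces, each $A'$-invariant. On $\langle j_{\overline{Y}}\rangle$ the $A'$-eigenvalue is $k+e^{-}$. For a common eigenvector $x\perp j_{\overline{Y}}$ with $Mx=-e^{+}e^{-}x\neq 0$, the relation $MA'=(e^{+}+e^{-})M-e^{-}\mu J_{\overline{Y}}$ (and $J_{\overline{Y}}x=0$) forces the $A'$-eigenvalue to be exactly $e^{+}+e^{-}$; this covers multiplicity $m^{-}-1$. For $x\in\ker M$ (also $\perp j_{\overline{Y}}$), the relation $(A'-e^{+}I)(A'-e^{-}I)x=(\mu J_{\overline{Y}}-M)x=0$ shows the $A'$-eigenvalue lies in $\{e^{+},e^{-}\}$. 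As the three subspaces have dimensions $1$, $m^{-}-1$, $v-2m^{-}$, summing to $|\overline{Y}|=v-m^{-}$, the entire spectrum of $A'$ is described, and the only open point is whether $e^{-}$ actually occurs on $\ker M$.

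The hard part is precisely ruling out the eigenvalue $e^{-}$, i.e.\ showing $\Gamma'$ has three eigenvalues and not four. Writing $m_{3}$ for the multiplicity of $e^{-}$ in $A'$, one checks that $(0,x)$ with $x\in\ker M$ and $A'x=e^{-}x$ is exactly an element of $V^{-}$ supported on $\overline{Y}$, so $m_{3}=\dim\ker\!\big(V^{-}\!\to\mathbb{R}^{Y}\big)$; since the inertia‑bound equality gives $\dim V^{-}=m^{-}=|Y|$, vanishing of $m_{3}$ is equivalent to this restriction being an isomorphism. I would close the argument by a trace computation: from $\operatorname{tr}A'=0$ and the multiplicities above,
$$(e^{+}-e^{-})\,m_{3}=(k+e^{-})+(e^{+}+e^{-})(m^{-}-1)+e^{+}(v-2m^{-}),$$
and substituting the expressions \eqref{eq:srg_ev_identities3}–\eqref{eq:srg_ev_identities4} for $k$, $m^{-}$ and $v$ in terms of $e^{+},e^{-}$ shows the right‑hand side is $0$; as $e^{+}-e^{-}>0$ this forces $m_{3}=0$. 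Consequently $A'$ has eigenvalues $k+e^{-}$, $e^{+}$ (multiplicity $v-2m^{-}$) and $e^{+}+e^{-}$ (multiplicity $m^{-}-1$), so $(A'-e^{+}I)(A'-(e^{+}+e^{-})I)$ vanishes on $j_{\overline{Y}}^{\perp}$ and is scalar on $j_{\overline{Y}}$, hence equals a multiple of $J_{\overline{Y}}$ — the defining relation of a strongly regular graph with the stated eigenvalues. A cleaner variant of the final step would establish injectivity of $V^{-}\to\mathbb{R}^{Y}$ directly from the spectral idempotent of $V^{-}$ together with $\chi_{Y}\in\langle j\rangle+V^{-}$, avoiding the explicit parameter substitution.
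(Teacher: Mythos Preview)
The paper does not give its own proof of this statement: Theorem~\ref{thm:subsrg} is quoted verbatim from \cite[Theorem 9.4.1]{Brouwer2012} and used as a black box, with the subsequent paragraph only translating its conclusion into the quasisymmetric-design language of Theorem~\ref{thm:quasisdesign}. So there is nothing in the paper to compare your argument against.

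That said, your proof is correct. The block decomposition, the computation $NN^{T}=-e^{+}e^{-}I_{Y}+\mu J_{Y}$, the commutation $MA'=A'M$ via the off-diagonal block of $A^{2}$, and the resulting trichotomy of $A'$-eigenvalues on the three $M$-eigenspaces all check out. The delicate point --- excluding the eigenvalue $e^{-}$ on $\ker M$ --- is exactly where the hypothesis $ve^{-}=m^{-}(e^{-}-k)$ enters, and your trace identity
\[
(e^{+}-e^{-})\,m_{3}=(k+e^{-})+(m^{-}-1)(e^{+}+e^{-})+(v-2m^{-})e^{+}
\]
does vanish upon substituting the expressions \eqref{eq:srg_ev_identities3}--\eqref{eq:srg_ev_identities4}; this is the cleanest way to finish. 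Your alternate observation that $m_{3}=\dim\ker(V^{-}\to\mathbb{R}^{Y})$ is also correct and connects nicely to the inertia-bound equality, though carrying it through without the trace would still require an extra argument (e.g.\ that the principal $Y\times Y$ submatrix of the idempotent $E^{-}$ is nonsingular). One minor remark: you implicitly use that the multiplicity $v-2m^{-}$ of $e^{+}$ is positive so that $\Gamma'$ genuinely has three eigenvalues; this holds since $v-2m^{-}=\dim\ker M\geq 0$, and equality would force $\Gamma'$ to have only two eigenvalues, a degenerate case not covered by the stated conclusion.
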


Theorem~\ref{thm:quasisdesign} is a restatement of this result. To see
this, let $\overline{\scrB}$ be the quasisymmetric $2$-$(\tilde{v}, \tilde{k}, \tilde{\lambda})$ design
defined in Theorem~\ref{thm:quasisdesign} constructed from a strongly
regular graph $\Gamma$ with parameters $(v,k,\lambda, \mu)$.
By the definition of $\overline{\scrB}$, $\tilde{v} = |Y| = m^-$. 
By Theorem \ref{thm:hoffmans_bnd}, $\tilde{k} = -e^-$.
By the definition of $\Gamma$, $\tilde{\lambda} = \mu$ and the
replication number is $k$.

Let $\Gamma'$ be the subgraph of $\Gamma$ induced by the vertices not
in a given Delsarte coclique $Y$. Clearly the degree of $\Gamma'$ is $k'=k+e^-$.
Applying Equation \eqref{eq:srg_ev_identities1} with these
eigenvalues, we obtain that the size $\mu'$ 
of the common neighborhood of two non-adjacent vertices in $\Gamma'$ satisfies
\begin{align*}
  &\mu' = e^+(e^-+e^+) + k' = e^+(e^-+e^+) + k + e^-.
\end{align*}
Hence, by \eqref{eq:srg_ev_identities1}, one of the intersection sizes
for the blocks in the quasisymmetric design is
\begin{align*}
  &\mu - \mu' = (e^+e^- + k) - (e^+(e^-+e^+) + k + e^-) = -(e^+)^2 - e^-.
\end{align*}

Similarly, by \eqref{eq:srg_ev_identities1}, the size $\lambda'$ of
the common neighborhood of two adjacent vertices in $\Gamma'$
satisfies
\begin{align*}
  &\lambda' = e^++(e^+ + e^-) + \mu' = 2e^++e^- + \mu'.
\end{align*}
Hence, by \eqref{eq:srg_ev_identities1}, the other intersection size is
\begin{align*}
  &\lambda - \lambda' = e^++e^- - (2e^++e^-) + (\mu-\mu') = -(e^+)^2 - e^+ - e^-.
\end{align*}

\section{Proof of Theorem \ref{thm:symmdesign}}

Parts (a) and (b) of Theorem \ref{thm:symmdesign} are surely known,
but we have only found them for a special case in literature due to
Makhnev et al. \cite[Lemma 2]{Makhnev2003}, which limits itself to
strongly regular graphs with the same parameters as the point graph of
a generalized quadrangle of order $(q, q^2-q)$.  Part (c) may be
known, but it was only observed for the special case mentioned before
\cite[Lemma 2]{Makhnev2003}. Part (d) is new. Part (e) and Part (f)
are straight-forward generalizations of \cite[Proposition
2]{Makhnev2003}.

\begin{proof}[Proof of Theorem \ref{thm:symmdesign}]
  Recall that $\scrB := \{ b_z: z \in Z \setminus Y \}$ where $b_z =
  \{y \in Y : y \sim z\}$. Since $Y$ and
  $Z$ are both Delsarte cocliques, we can switch the roles of $Y$ and
  $Z$ in the definition of $\overline{\scrB}$ in
  Theorem~\ref{thm:quasisdesign}.  By Theorem \ref{thm:quasisdesign},
  each set in $\scrB$ has size $-e^-$ and any two distinct sets have
  intersection size $-(e^+)^2 - e^-$.

  Apply standard double-counting arguments to triples $(b, p, p')$,
  where $p, p' \in Y \setminus Z$ and $b \in Z \setminus Y$, with both
  $p$ and $p'$ adjacent to $b$ in $\Gamma$, to obtain
  \begin{align*}
    |\scrB|= \frac{(e^+)^2 - (e^-)^2}{(e^+)^2 + e^-}.
  \end{align*}
Then by Lemma~\ref{lem:char_sym_design} $\scrB$ is a symmetric $2$-$\left(\frac{(e^+)^2 - (e^-)^2}{(e^+)^2 + e^-},
  -e^-, -(e^+)^2 - e^-\right)$. 

  Thus Part (a) holds. Part (b) is implied by (a) and Equation \eqref{eq:srg_ev_identities3}.
  
  Part (c) follows from Theorem \ref{thm:hoffmans_bnd} if we can show that there is equality in Hoffman's bound for the coclique $Z\setminus Y$ in the graph
  $\Gamma'$ induced by the vertices of $\Gamma$ not in $Y$. Using the parameters from Theorem \ref{thm:subsrg}, we need to show that
 \begin{align*}
    \frac{(v-m^-)(e^+ + e^-)}{(e^+ + e^-) - (k + e^-)} = \frac{(e^+)^2 - (e^-)^2}{(e^+)^2 + e^-}
  \end{align*}
  holds. Using $ve^- = m^-(e^- - k)$ and \eqref{eq:srg_ev_identities1}, this is equivalent to
  \begin{align*}
    \mu (e^+ - e^-) = -k((e^+)^2 + e^-).
  \end{align*}
  By \eqref{eq:srg_ev_identities3}, this is true.
  Thus, we can apply Part (b) of Theorem \ref{thm:hoffmans_bnd} and the eigenvalues given by Theorem \ref{thm:subsrg}
  to show that each vertex in $\Gamma'$ that is not in $Z$ is adjacent to exactly $- e^+ - e^-$ vertices in $Z\setminus Y$. 
  Since the vertices in $\Gamma'$ that are not in $Z$ are adjacent to $-e^-$ vertices in $Z$, they must be adjacent to $e^+$ vertices in $Z\cap Y$.
  
  For the bound in (d), suppose that we have $c$ Delsarte cocliques $Y_1, \ldots, Y_c$. Consider the $c \times v$ matrix $M$
  whose rows are the characteristic vectors of the cocliques $Y_i$. By Theorem \ref{thm:hoffmans_bnd}, the row span of $M$ 
  lies in $\langle j \rangle + V^-$, so $\rank(M) \leq 1+m^-$.
  By Part (b),
  \begin{align*}
    ((e^+)^2 + e^-)MM^T = ((e^+)^2 - (e^-)^2) I + (e^-e^+ + e^+) J,
  \end{align*}
  where $I$ is the identity matrix and $J$ is the all-ones matrix.  Since
  $(e^+)^2-(e^-)^2 > e^-e^++e^+$ the eigenvalues of this matrix are
  strictly positive.  Hence, $\rank(MM^T) = c$.  As
  $\rank(MM^T) \leq \rank(M) \leq 1+m^-$, we obtain the desired bound.
  
  For Part (e) and Part (f), let $Y_1$, $Y_2$, and $Y_3$ be three different Delsarte cocliques. Let $S$ denote
  $Y_1 \cap Y_2 \cap Y_3$. Let $\beta = |S|$. Double counting the number of edges between 
  $(Y_1 \cap Y_2) \setminus S$ and $Y_3 \setminus (Y_1 \cup Y_2)$, we obtain
  \begin{align*}
    \left( \frac{(e^-+1)e^+}{(e^+)^2 + e^-} - \beta \right) (-e^-) 
  =  \left( m^- - 2 \frac{(e^-+1)e^+}{(e^+)^2 + e^-} + \beta\right) (e^+).
  \end{align*}
  From Equation \eqref{eq:srg_ev_identities3}, we obtain
  \begin{align}
    ((e^+)^2 + e^-)(e^--e^+)\beta = e^+ ((e^+)^2 - e^-e^+ + e^- - e^+). \label{eq:cocl_int_size}
  \end{align}
  This implies (e).
  If $e^-$ and $e^+$ are coprime, then this implies $\beta \equiv 0 \mod{e^+}$. 
  Hence, $\beta = 0$ or $\beta =\gamma e^+ $ for some positive integer $\gamma$. If $\beta = 0$, then Equation \eqref{eq:cocl_int_size} gives
  \begin{align*}
   0 = (e^+)^2 - e^-e^+ + e^- - e^+ = (e^+-e^-)(e^+-1).
  \end{align*}
  As the first factor is always positive, we obtain $e^+=1$.
  If $\beta = \gamma e^+ $, then \eqref{eq:cocl_int_size} gives
  \begin{align*}
   0 =  \gamma (e^+)^2 + \gamma e^- + e^+ - 1 < \gamma( (e^+)^2 +e^- +e^+).
  \end{align*}
  The number $-(e^+)^2 - e^+ - e^-$ is one of our intersection numbers, so it is nonnegative.
  Hence, the right hand side of the above equation is nonpositive and therefore 
$\gamma(e^+)^2 - e^+- e^-$ is negative.
  Thus, the case $\beta = \gamma e^+$ cannot occur and therefore Part (f) holds.
  
  For the second and third part of (d), consider $Y_1 \cap Y_2, Y_1 \cap Y_3, Y_1 \cap Y_4, \ldots$
  as a family of $\tilde{k}$-subsets of $Y_1$. By (b) and (e), we can apply Lemma \ref{lem:char_sym_design}
  with 
  \begin{align*}
    &\tilde{v} = m^-, && \tilde{k} = \frac{(e^-+1)e^+}{(e^+)^2 + e^-}, && s = \frac{-(e^+)^2 + e^+}{(e^+)^2 + e^-}.
  \end{align*}
  Here, using Equation \eqref{eq:srg_ev_identities3}, the identity $\tilde{k}(\tilde{k}-1) = (\tilde{v}-1)s$ is 
  easily verified. Due to this construction we know that every vertex of $\Gamma$ lies in $0$ or
  $1+\frac{(e^-+1)e^+}{(e^+)^2 + e^-}$ Delsarte cocliques. A Delsarte coclique contains
  $m^-$ elements. Using Equation \eqref{eq:srg_ev_identities3} and Equation \eqref{eq:srg_ev_identities4}, we see
  that $v (1 + \frac{(e^-+1)e^+}{(e^+)^2 + e^-}) = m^-(m^-+1)$. This concludes the proof of (d).
\end{proof}

\section{Known Examples With Delsarte Cocliques}

We start by applying Theorems \ref{thm:quasisdesign} and \ref{thm:symmdesign} to the only known examples for such graphs,
namely the complements of triangular graphs and the $M_{22}$ graph on $77$ vertices.

\begin{ex}
  The complements of the triangular graphs $T(n)$ can be defined in the following way: the $2$-subsets 
  of $\{ 1, \ldots, n \}$ are the vertices of the graph and two vertices are adjacent
  if their intersection is empty. It is well-known that this is a graph with parameters
  \begin{align*}
    &v = \binom{n}{2}, && m^- = n-1,\\
    &e^+ = 1, && e^- = -n + 3.
  \end{align*}
  For $n > 4$, from Theorem \ref{thm:quasisdesign} it is easy to verify that the largest independent sets in this graph correspond 
  to the set of all $2$-sets that contain a fixed element.\footnote{This is a 
  very special case of the famous Erd\H{o}s-Ko-Rado theorem.}
  
 By Theorem \ref{thm:symmdesign},
   these independent sets pairwise intersect in exactly $1$ element.
  Notice that this is no longer the case when $n=4$. For example, 
  $\{ \{ 1, 2 \}, \{ 1, 3 \}, \{ 1, 4 \} \}$ and $\{ \{ 1, 2 \}, \{ 1, 3 \}, \{ 2, 3 \} \}$
  are independent sets that share two elements. It is easy to check that the common intersection
  of three Delsarte cocliques is $0$ as claimed by Theorem \ref{thm:symmdesign}.
  
  Notice that for $n=5$ the corresponding graph is the Petersen graph; for $n=6$ the 
  corresponding graph is the point graph of the unique generalized quadrangle of order $(2, 2)$;
  for $n=7$ the corresponding graph is Taylor's $2$-graph with $e^+=1$.
\end{ex}

\begin{ex}
  The $M_{22}$ graph has parameters
  \begin{align*}
    &v = 77, && m^- = 21,\\
    &e^+ = 2, && e^- = -6.
  \end{align*}
  It is well-known and can be easily checked that this graph possesses $22$ cocliques of size $21$ which 
  pairwise intersect in $5$ elements, while the common intersection of three 
  Delsarte cocliques is $1$. This is also implied by Theorem \ref{thm:symmdesign}.
  As we have equality in Theorem \ref{thm:symmdesign} (d), we can identify the 
  intersections $Y_1 \cap Y_2, \ldots, Y_1 \cap Y_{21}$ of the Delsarte cocliques 
  with a $2$-$( 20, 5, 1 )$ design. That is the unique projective plane of order $4$.
  The design from Theorem \ref{thm:symmdesign} (a) is a $2$-$(21, 6, 2)$ design, so
  a biplane of order $4$. It is well-known that there are three such biplanes, but from $M_{22}$ we only obtain
  the unique biplane with an automorphism group of order $11520$.
\end{ex}

\section{Generalized Quadrangles of Order \texorpdfstring{$(q, q^2-q)$}{(q, q\textasciicircum2-q)}}\label{sec:gq}

Although the results in this section are valid for all strongly
regular graphs having the parameters listed below, we state the
results in terms of generalized quadrangles, as there has been great
interest in the existence of Delsarte cocliques in the point graphs of
generalized quadrangles.  Recall that for a generalized quadrangle, a
Delsarte coclique is called an \emph{ovoid}.  It is known that
generalized quadrangles of order $(q, q')$ with $q' > q^2-q$ do not
possess ovoids, while it is an open question whether generalized
quadrangles of order $(q, q^2-q)$ possess ovoids \cite[Section
1.8]{Payne2009}.  We will rule out the existence of ovoids for various
parameters $q$.  Except for $q \in \{ 2, 3 \}$, the existence of a
generalized quadrangle of order $(q, q^2-q)$ is open, so our results
may only apply to an empty set. See \cite[Chapter 6]{Payne2009} for
the unique existing generalized quadrangle of order $(2, 2)$ and the
non-existence of generalized quadrangles of order $(3, 6)$.  As shown
in \cite[Section 1.15]{Brouwer1989}, the parameters of the point graph
of a generalized quadrangle of order $(q, q^2-q)$ are as follows,
where $q$ is an integer larger than $1$.
\begin{align*}
  &v = (q+1)(q^3-q^2+1), && m^- = q^3-q^2+1,\\
  &e^+ = q-1, && e^- = -q^2+q-1.
\end{align*}

\begin{theorem}\label{thm:gqnonexist}
  A generalized quadrangle of order $(q, q^2-q)$ does not possess an ovoid 
  if one of the following cases occurs:
  \begin{enumerate}[(i)]
   \item $q \equiv 3 \mod{8}$, or
   \item $q = \ell p^e + 1$, where $p$ is a prime with $p \equiv 3 \mod{4}$, 
	  $e$ is odd, and $\ell \equiv 2 \mod{4}$. 
  \end{enumerate}
\end{theorem}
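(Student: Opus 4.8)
The plan is to argue by contradiction. Suppose a generalized quadrangle of order $(q,q^2-q)$ possesses an ovoid. This ovoid is a Delsarte coclique $Y$ of the point graph $\Gamma$, and since $\Gamma$ has the parameters listed above it satisfies $ve^- = m^-(e^- - k)$, so Theorem~\ref{thm:quasisdesign} applies and produces a quasisymmetric design; the goal is then to show that for the values of $q$ in (i) and (ii) this design cannot exist, using the obstructions of Section~\ref{sec:pre}. First I would pin down the design parameters: with $e^+ = q-1$ and $e^- = -(q^2-q+1)$, Equation~\eqref{eq:srg_ev_identities3} gives $k = q(q^2-q+1)$ and $\mu = q^2-q+1$, so by Theorem~\ref{thm:quasisdesign} we obtain a quasisymmetric $2$-$(q^3-q^2+1,\,q^2-q+1,\,q^2-q+1)$ design with replication number $\tilde r = k = q^3-q^2+q$ and intersection numbers
\begin{align*}
 s_1 = -(e^+)^2-e^+-e^- = q-(q-1) = 1, \qquad s_2 = -(e^+)^2-e^- = q.
\end{align*}
Two identities I would reuse are $\tilde r - \tilde\lambda = (q-1)(q^2-q+1)$ and $\tilde v = q^3-q^2+1 = q^2(q-1)+1$; in particular $q^2\equiv 1\mod{8}$ for odd $q$, so $\tilde v\equiv q\mod{8}$ in that case.

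For case (i): here $q\equiv 3\mod{8}$ is odd, so $s_1\equiv s_2\mod{2}$; moreover $q-1\equiv 2\mod{4}$ and $q^2-q+1$ is odd, so $\tilde r-\tilde\lambda\equiv 2\mod{4}$, and the hypotheses of Theorem~\ref{thm:q_eq_2_quasisymm} are met. That theorem then forces $\tilde v\equiv\pm1\mod{8}$, contradicting $\tilde v\equiv q\equiv 3\mod{8}$.

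For case (ii): write $q = \ell p^e+1$ with $p^e$ the exact power of $p$ dividing $q-1$ (so $p\nmid\ell$). Then $s_1\equiv s_2\equiv 1\mod{p^e}$; and since $q\equiv 1\mod{p}$ we have $p\nmid q^2-q+1$, so $\tilde r-\tilde\lambda=(q-1)(q^2-q+1)$ is divisible by $p^e$ but not by $p^{e+1}$, i.e.\ $\tilde r\not\equiv\tilde\lambda\mod{p^{e+1}}$; finally $q$ is odd (as $\ell$ is even), so $\tilde v=q^2(q-1)+1$ is odd. Thus Theorem~\ref{thm:q_neq_2_quasisymm} applies with $s=1$, and since $p\nmid s$ we have $\psi(s)=0$, which is even; hence alternative (a) is impossible and alternative (b) must hold, namely (with $\sigma=1$) that $(-1)^{(\tilde v-1)/2}$ is a square modulo $p$. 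But $\ell\equiv 2\mod{4}$ gives $q-1\equiv 2\mod{4}$, so $\tilde v=q^2(q-1)+1\equiv 3\mod{4}$ and $(\tilde v-1)/2$ is odd; since $p\equiv 3\mod{4}$ makes $-1$ a non-square modulo $p$, alternative (b) fails. This contradiction completes the proof.

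I expect the only delicate point to be checking the hypothesis $\tilde r\not\equiv\tilde\lambda\mod{p^{e+1}}$ in case (ii): this is exactly what forces one to read $p^e$ as the \emph{exact} power of $p$ dividing $q-1$ (equivalently $p\nmid\ell$), and it is also what dictates which member of the Blokhuis--Calderbank family must be invoked, since the parity hypotheses on $e$ and on $\tilde v$ in Theorem~\ref{thm:q_neq_2_quasisymm} have to line up with the congruence conditions imposed on $q$. Everything else reduces to routine congruence bookkeeping.
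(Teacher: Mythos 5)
Your proposal is correct and follows essentially the same route as the paper: translate the ovoid via Theorem \ref{thm:quasisdesign} into a quasisymmetric $2$-$(q^3-q^2+1, q^2-q+1, q^2-q+1)$ design with intersection numbers $1$ and $q$, then rule it out by Calderbank's mod-$8$ criterion in case (i) and Blokhuis--Calderbank (Theorem \ref{thm:q_neq_2_quasisymm}, alternative (b) with $s=\sigma=1$, $\tilde v\equiv 3\pmod 4$, $-1$ a non-square mod $p$) in case (ii). Your explicit remark that one must take $p^e$ to be the exact power of $p$ dividing $q-1$ (i.e.\ $p\nmid\ell$) to secure $\tilde r\not\equiv\tilde\lambda\pmod{p^{e+1}}$ is exactly the assumption the paper's proof makes implicitly.
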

\begin{proof}
  By Theorem \ref{thm:quasisdesign}, the existence of an ovoid is equivalent to
  the existence of a quasisymmetric $2$-$\left( q^3-q^2+1, q^2-q+1,
    q^2-q+1 \right)$ design with
  intersection numbers $\{ 1, q \}$ and replication number $q(q^2-q+1)$.
  
  If $q \equiv 3 \mod{8}$, then 
  \begin{align*}
    &q \equiv 1 \mod{2},\\
    &r - \tilde{\lambda} \equiv (q-1)(q^2-q+1) \equiv 2 \cdot 3 \not\equiv 0 \mod{4},\\
    &\tilde{v} \equiv q^3-q^2+1 \equiv 3 \not\equiv \pm 1 \mod{8}.
  \end{align*}
  Hence, Theorem \ref{thm:q_eq_2_quasisymm} implies non-existence of an ovoid for Case (i).
  
  If $q = \ell p^e + 1$, then
  \begin{align*}
    &q \equiv 1 \mod{p^e},\\ & r - \tilde{\lambda} \equiv (q-1)(q^2-q+1) \equiv \ell p^e \mod{p^{e+1}},\\
    &\tilde{v} \equiv q^2(q-1)+1 \equiv 1 \cdot 2 + 1 \equiv 3 \mod{4},\\ &\psi(s) \equiv 0 \mod{2}.
  \end{align*}
  The condition that $p \equiv 3 \mod{4}$ is equivalent to the statement that $-1$ is not a square modulo $p$. 
  It then follows that $-\sigma(-1)^{\frac{\tilde{v} +1}{2}}$ is not a square modulo $p$. 
  Hence, Theorem \ref{thm:q_neq_2_quasisymm} implies non-existence for Case (ii).
\end{proof}

Theorem \ref{thm:gqnonexist} rules out the existence of an ovoid for $q = 7$, but
not for $q \in \{ 4, 5, 6 \}$. 
If several ovoids exist, then, by Theorem \ref{thm:symmdesign},
they pairwise intersect in $(q-1)^2$ points. This is well-known
 for the unique generalized quadrangle of order $(2, 2)$.
This quadrangle belongs to a family of generalized quadrangles of order $(q,q)$
for which strong intersection conditions between ovoids are known \cite{Ball2006}.

The next open case is the generalized quadrangle of order $(4, 12)$.
By Theorem \ref{thm:symmdesign}, we obtain a symmetric $2$-$(40, 13, 4)$ design.
Many such designs are known \cite{Cepulic1994,Spence1991} (for example, we can
take the $1$-dimensional subspaces of $\bbF_3^4$ as elements and the 
$3$-dimensional subspaces of $\bbF_3^4$ as blocks) but it is an open problem to use such a design
 to construct a generalized quadrangle:

\begin{problem}
  Construct a generalized quadrangle of order $(4, 12)$, starting with a $2$-$(40, 13, 4)$ design.
\end{problem}

We doubt that this is possible due to the following lemma: 
\begin{lemma}\label{thm:gq_ovoids}
  A point-transitive generalized quadrangle of order $(q, q^2-q)$, where $q > 2$, does not possess an ovoid.
\end{lemma}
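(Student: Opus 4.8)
The plan is to exploit the counting machinery from Theorem \ref{thm:symmdesign} together with the point-transitivity to derive a contradiction. Suppose a point-transitive generalized quadrangle $\mathcal{Q}$ of order $(q, q^2-q)$ with $q > 2$ possesses an ovoid $Y$. The key observation is that point-transitivity forces every point of the point graph $\Gamma$ to lie in the same number of ovoids (since the automorphism group acts transitively on vertices and sends ovoids to ovoids, the number of ovoids through a vertex is constant). Call this constant $N$, so that counting incidences between points and ovoids gives $N v = (\text{number of ovoids}) \cdot m^-$, i.e.\ $N (q+1)(q^3-q^2+1) = (\text{number of ovoids})(q^3-q^2+1)$, whence the number of ovoids equals $N(q+1)$.

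First I would dispose of the case $N = 1$: if every point lies in exactly one ovoid, then the ovoids partition the point set, and in particular any two distinct ovoids are disjoint. But then there is more than one ovoid (as $N(q+1) \ge q+1 > 1$), and Theorem \ref{thm:symmdesign}(b) says any two distinct ovoids meet in $(q-1)^2$ points, which is positive for $q > 2$ — contradiction. So $N \geq 2$, meaning there exist at least two Delsarte cocliques, and Theorem \ref{thm:symmdesign} applies in full. Next I would bring in part (d): $\Gamma$ contains at most $m^- + 1 = q^3 - q^2 + 2$ Delsarte cocliques, and moreover equality in part (d) would force every vertex to lie in exactly $1 + \frac{(e^-+1)e^+}{(e^+)^2 + e^-}$ of them. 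With $e^+ = q-1$ and $e^- = -q^2+q-1$, one computes $\frac{(e^-+1)e^+}{(e^+)^2+e^-} = \frac{(-q^2+q)(q-1)}{(q-1)^2 + (-q^2+q-1)} = \frac{-q(q-1)^2}{-q} = (q-1)^2$, so equality in (d) would mean $N = 1 + (q-1)^2$ and the number of ovoids is exactly $m^-+1$.

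The heart of the argument is then to show $N$ cannot be anything other than $1 + (q-1)^2$, or else to rule that value out too. Here is where I would lean on part (e) and the coprimality-style reasoning behind part (f). Fix one ovoid $Y_1$; the remaining $N(q+1) - 1$ ovoids cut out on $Y_1$ a family of subsets of size $(q-1)^2$ (by (b)), pairwise intersecting in $\frac{-(e^+)^2+e^+}{(e^+)^2+e^-} = \frac{-(q-1)^2 + (q-1)}{-q} = \frac{(q-1)(q-2)}{q}$ points (by (e)) — and the proof of (d) already identifies this family, when it has size $m^-$, with a symmetric $2$-design on $m^-$ points. The plan is to argue that point-transitivity forces this family to be "large" — specifically, $N \geq 2$ combined with the constancy of $N$ across all vertices of $Y_1$ should force the family through $Y_1$ to realize equality, i.e.\ $N(q+1) - 1 = m^-$; if instead $N(q+1)-1 < m^-$ one gets a proper "partial" symmetric design which, by a Fisher-type inequality argument (a set of $b \le v$ blocks all meeting pairwise in a constant $s < k$, with equireplication forced by transitivity, cannot be strictly smaller than $v$ unless it is a single block), is impossible. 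Once $N(q+1) - 1 = m^- = q^3 - q^2 + 1$, we get $N(q+1) = q^3 - q^2 + 2$; reducing modulo $q+1$, note $q \equiv -1$, so $q^3 - q^2 + 2 \equiv -1 - 1 + 2 = 0 \pmod{q+1}$, which is consistent — so I would instead extract the contradiction from requiring that the resulting symmetric $2$-$((q-1)^2, \frac{(q-1)(q-2)}{q}, \ldots)$ design (from part (d)'s second conclusion) actually have integer parameters, which fails unless $q \mid (q-1)(q-2) = q^2 - 3q + 2$, i.e.\ unless $q \mid 2$; since $q > 2$, this is the contradiction.

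The main obstacle I anticipate is the middle step: justifying rigorously that point-transitivity upgrades "at least two ovoids through each point" to "the family of ovoids through a fixed ovoid is a full symmetric design on $m^-$ points." The clean version uses that transitivity makes the incidence structure (points of $Y_1$ versus other ovoids) a $1$-design, hence equireplicated, and then Lemma \ref{lem:char_sym_design} or a Fisher inequality closes it; but one must be careful that the group action is transitive on \emph{ordered} configurations enough to force this, or find an alternative route — possibly counting flags (point, ovoid, ovoid) two ways and using part (e) directly to solve for $N$ without invoking design-completion at all. If the flag-counting route works it would be cleaner: double-count triples $(p, Y_i, Y_j)$ with $p \in Y_i \cap Y_j$, $i \ne j$, using $|Y_i \cap Y_j| = (q-1)^2$, the number of ordered pairs being $N(q+1)(N(q+1)-1)$, and $p$ ranging over $v$ points each in $N$ ovoids hence in $\binom{N}{2}$ ordered... — solving the resulting quadratic in $N$ should pin $N$ down, and combining with part (e)'s requirement that triple intersections have the non-integer size $\frac{(q-1)(q-2)}{q}$ gives the contradiction whenever $N \geq 3$, while $N \leq 2$ was already handled. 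I would present whichever of these two routes turns out to need fewer case distinctions.
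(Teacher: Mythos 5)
Your core ingredients are the right ones --- point-transitivity forces many ovoids, and the intersection constraints of Theorem \ref{thm:symmdesign} forbid this --- but the route you actually propose has genuine gaps. The ``Fisher-type inequality'' step, which is supposed to force $N(q+1)-1=m^-$ on the grounds that a proper subfamily of $(q-1)^2$-subsets of $Y_1$ with constant pairwise intersection cannot exist, is not a theorem: nothing rules out such a family with fewer than $m^-$ blocks, and the equireplication on $Y_1$ that you want to extract from symmetry would need the stabilizer of the ovoid $Y_1$ to act transitively on the points of $Y_1$, which point-transitivity of the full automorphism group does not provide (the group need not stabilize any ovoid at all). Your fallback route also has a bookkeeping hole: you say part (e) gives the contradiction ``whenever $N\geq 3$, while $N\leq 2$ was already handled,'' but only $N=1$ was handled (via part (b)); $N=2$ is left to the flawed primary argument. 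The hole is easy to close, because part (e) does not require three ovoids through a common point: it computes the common intersection of \emph{any} three Delsarte cocliques, so as soon as three distinct ovoids exist, the non-integer value $\frac{(q-1)(q-2)}{q}$ (for $q>2$) is already a contradiction; no case distinction on $N$, no equality in part (d), and no design-completion is needed.

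The short repair is exactly the paper's proof, and your own counting nearly reaches it: transitivity gives at least $v/m^- = q+1\geq 4$ ovoids (the images of one ovoid under the transitive group cover all $v$ points, each image having $m^-$ points), while Theorem \ref{thm:symmdesign}(f) caps the number of Delsarte cocliques at two, since $e^+=q-1>1$ and $\gcd(q-1,\,q^2-q+1)=\gcd(q-1,\,q(q-1)+1)=1$. Equivalently, your observation that part (e) yields the non-integer triple intersection $\frac{(q-1)(q-2)}{q}$, applied to any three of these $q+1$ ovoids, does the same job. Everything involving the constant $N$, the partial symmetric design, and the equality case of part (d) can simply be discarded.
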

\begin{proof}
  Recall that an ovoid has size $m^-$.
  Suppose that the generalized quadrangle contains at least one ovoid.
  Then we have at least $v/m^- = q+1$ ovoids due to transitivity.
  By \cite{Makhnev2003} (or Theorem \ref{thm:symmdesign} (f)) a generalized quadrangle of order $(q,q^2-q)$,
   where $q>2$, can have at most two ovoids. This is a contradiction, so the quadrangle possesses no ovoids.
\end{proof}
By Lemma \ref{thm:gq_ovoids}, a generalized quadrangle of order $(4,12)$ would be very asymmetric, as it 
could not be point-transitive.

\section{Taylor's \texorpdfstring{$2$}{2}-graph}\label{sec:taylor}

For the case that $q$ is an odd prime power, we refer to \cite{Spence1992} for a definition of Taylor's $2$-graph for $U(3, q)$.
One strongly regular graph associated with Taylor's $2$-graph for $U(3,q)$ has parameters
\begin{align*}
  & v = (q+1)(q^2-q+1), && m^- = q^2-q+1,\\
  & e^+ = \frac{q-1}{2}, && e^- = -\frac{q^2+1}{2}.
\end{align*}
Again, our results hold for all graphs with the same parameters.

\begin{theorem}\label{thm:taylor_ex_cond}
  A strongly regular graph $\Gamma$ with the parameters 
  $v = (q+1)(q^2-q+1)$, $e^+ = \frac{q-1}{2}$, and $e^- = -\frac{q^2+1}{2}$, where $q > 1$ is odd, does not possess a 
  Delsarte coclique if one of the following occurs:
  \begin{enumerate}[(a)]
   \item $q \equiv 5 \mod{8}$, or
   \item $q = 2 \ell p^{e} + 1$, where $p$ is a prime, $p \equiv 3 \mod{4}$, $e$ is odd, $\ell$ is odd, and $\gcd(\ell, p) =1$.
  \end{enumerate}
  Furthermore, $\Gamma$ possesses at most one Delsarte coclique if $q > 3$.
\end{theorem}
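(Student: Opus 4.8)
The plan is to follow the template of Theorem~\ref{thm:gqnonexist}: convert the existence of a Delsarte coclique into the existence of a suitable quasisymmetric design via Theorem~\ref{thm:quasisdesign}, apply the relevant Blokhuis--Calderbank obstruction to rule that design out, and obtain the final clause directly from Theorem~\ref{thm:symmdesign}(f).

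First I would record the design parameters. Substituting $e^+=\tfrac{q-1}{2}$ and $e^-=-\tfrac{q^2+1}{2}$ into \eqref{eq:srg_ev_identities3} and Theorem~\ref{thm:quasisdesign}, the existence of a Delsarte coclique is equivalent to the existence of a quasisymmetric $2$-$\bigl(q^2-q+1,\ \tfrac{q^2+1}{2},\ \tfrac{(q^2+1)(q+1)}{4}\bigr)$ design with replication number $\tfrac{q(q^2+1)}{2}$ and intersection numbers $s_1=-(e^+)^2-e^-=\tfrac{(q+1)^2}{4}$ (non-adjacent case) and $s_2=-(e^+)^2-e^+-e^-=\tfrac{q^2+3}{4}$ (adjacent case). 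The quantities used repeatedly below are $s_1-s_2=\tfrac{q-1}{2}$ and $\tilde r-\tilde\lambda=\tfrac{(q^2+1)(q-1)}{4}$.

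For part (a), with $q\equiv 5\mod{8}$: since $\tfrac{q-1}{2}\equiv 2\mod{4}$ is even we get $s_1\equiv s_2\mod{2}$, and a short congruence check gives $\tilde r-\tilde\lambda\equiv 2\mod{4}$, hence $\tilde r\not\equiv\tilde\lambda\mod{4}$; Theorem~\ref{thm:q_eq_2_quasisymm} would then force $q^2-q+1\equiv\pm1\mod{8}$, contradicting $q^2-q+1\equiv 5\mod{8}$. For part (b), with $q=2\ell p^e+1$, I would adapt the argument used for Theorem~\ref{thm:gqnonexist}(ii), applying Theorem~\ref{thm:q_neq_2_quasisymm} to the prime $p$ and odd exponent $e$: here $s_1-s_2=\ell p^e$ gives $s_1\equiv s_2\equiv s\mod{p^e}$ with $s\equiv 1\mod{p^e}$ (so $\psi(s)=0$, which is even); a $p$-adic valuation count gives $\tilde r-\tilde\lambda=\tfrac{(q^2+1)\ell}{2}\,p^e$ with $\tfrac{(q^2+1)\ell}{2}$ prime to $p$, so $\tilde r\not\equiv\tilde\lambda\mod{p^{e+1}}$; and $\tilde v=q^2-q+1$ is odd because $q$ is. Hence case (b) of Theorem~\ref{thm:q_neq_2_quasisymm} must hold, namely $(-1)^{(\tilde v-1)/2}\sigma$ is a square modulo $p$, where $\sigma\equiv s\equiv 1\mod{p}$. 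Since $\ell p^e$ is odd we have $q\equiv 3\mod{4}$, and $(\tilde v-1)/2=\tfrac{q(q-1)}{2}=q\ell p^e$ is odd, so the condition reduces to the statement that $-1$ is a square modulo $p$, which is false because $p\equiv 3\mod{4}$.

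Finally, the last clause follows from Theorem~\ref{thm:symmdesign}(f): setting $d=\gcd\bigl(e^+,-e^-\bigr)$ we have $2d\mid q-1$ and $2d\mid q^2+1$, so $2d\mid(q^2+1)-(q-1)(q+1)=2$ and thus $d=1$; moreover $e^+=\tfrac{q-1}{2}>1$ is exactly the hypothesis $q>3$, so $\Gamma$ has at most two Delsarte cocliques. I expect the only genuinely delicate point to be part (b): confirming that case (b), rather than case (a), of Theorem~\ref{thm:q_neq_2_quasisymm} is the one forced, and then tracking the parity of $(\tilde v-1)/2$ carefully enough that the quadratic-residue condition collapses cleanly to the non-residuosity of $-1$ modulo $p$.
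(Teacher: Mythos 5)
Your proposal is correct and follows essentially the same route as the paper: it converts a Delsarte coclique into the quasisymmetric $2$-$(q^2-q+1,\tfrac{q^2+1}{2},\tfrac{q^3+q^2+q+1}{4})$ design of Theorem \ref{thm:quasisdesign}, applies Theorem \ref{thm:q_eq_2_quasisymm} for case (a) and Theorem \ref{thm:q_neq_2_quasisymm} (with $s\equiv 1$, $\psi(s)=0$, and $(\tilde v-1)/2$ odd) for case (b), and derives the final clause from Theorem \ref{thm:symmdesign}(f) via the coprimality of $e^+=\tfrac{q-1}{2}$ and $e^-=-\tfrac{q^2+1}{2}$. All the congruence computations check out and match the paper's proof, so no gaps remain.
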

\begin{proof}
  By Theorem \ref{thm:quasisdesign}, the existence of an ovoid is equivalent to
  the existence of a quasisymmetric $2$-$( q^2-q+1, \frac{q^2+1}{2}, \frac{q^3+q^2+q+1}{4} )$ design with
  intersection numbers $\{ \frac{q^2+3}{4}, \frac{(q+1)^2}{4} \}$ and replication number $\frac{q^3+q}{2}$.
  
  If $q \equiv 5 \mod{8}$, then 
  \begin{align*}
    &q \equiv 1 \mod{2},\\
    &\tilde{\lambda} - r \equiv \frac{(q^2+1)(1-q)}{4} \equiv 2 \not\equiv 0 \mod{4},\\
    &\tilde{v} \equiv q^2 - q + 1 \equiv 5 \not\equiv \pm1 \mod{8}.
  \end{align*}
  Hence, Theorem \ref{thm:q_eq_2_quasisymm} implies non-existence of a Delsarte coclique for Case (a).
  
  If $q = 2 \ell p^e +1$, then
  \begin{align*}
    &s := 1 \equiv  \frac{q^2+3}{4} \equiv \frac{(q+1)^2}{4} \mod{p^{e+1}},\\
    &q \equiv 1 \mod{p^e},\\
    &\tilde{\lambda}-r \equiv \frac{-q^3+q^2-q+1}{4} \equiv \ell p^e \not\equiv 0 \mod{p^{e+1}},\\
    &\tilde{v} \equiv q^2 - q + 1 \equiv 1 - 2 \cdot \ell \cdot 3 - 1 + 1 \equiv 3 \mod{4},\\
    &\psi(s) \equiv 0 \mod{2},\\
    &\frac{\tilde{v}-1}{2} =  \frac{q^2 - q}{2}  \equiv 1 \mod{2}.
  \end{align*}
  Again, having $p \equiv 3 \mod{4}$ means that
  $\sigma(-1)^{\frac{\tilde{v} -1}{2}} = -1$ is not a square modulo
  $p$. Therefore Theorem \ref{thm:q_neq_2_quasisymm} implies
  non-existence of a Delsarte coclique for Case (b).
  
We now show that $\Gamma$ has at most one Delsarte coclique. Suppose that $\Gamma$ has Delsarte cocliques $Y$ and $Z$. We find that
  $|Y \cap Z| = \frac{(q-1)^2}{q+1} = q-3 + \frac{4}{q+1}$, by Theorem \ref{thm:symmdesign}. For $e^+$, so $q>3$, this is not an integer.
\end{proof}

\paragraph*{Erratum} An earlier version of this file claimed that this SRG derived from Taylor's two-graph possesses no Delsarte cocliques.

\section{Generalized \texorpdfstring{$M_{22}$}{M22} Graphs}\label{sec:m22}

The parameters of the $M_{22}$ graph are part of the following infinite family, where $q$ is a positive integer.
\begin{align*}
  & v = (q^2+2q-1)(q^2+3q+1), && m^- = (q+1)(q^2+2q-1),\\
  & e^+ = q, && e^- = -q^2-q.
\end{align*}
One noteworthy property of these graphs is that $\lambda = 0$. No such graphs seem to be known for $q > 2$.
For $q=1$ these are the parameters of the Petersen graph.

The smallest open case is $q=3$. Here the symmetric design of Theorem \ref{thm:symmdesign} (a)
has parameters $2$-$(45, 12, 3)$ and many such designs are known. 
In particular all of the designs with a non-trivial automorphism group are classified \cite{Crnkovic2016}.
Using an MIP solver, we verified that none of these designs can be extended to a quasisymmetric $2$-$(56, 12, 9)$ design.
Hence, we conjecture that no graph of the above family with $q=3$ contains a Delsarte coclique.
Therefore, the most promising open case for a construction is $q=4$.
Here one would take a symmetric $2$-$(96, 20, 4)$ design and try to extend it
to a quasisymmetric $2$-$(115, 20, 16)$ design with intersection numbers $0$ and $4$.

Following an idea by Alexander L. Gavrilyuk\footnote{Private communication for the $q=3$ case.},
we have the following lemma about the number of cocliques.

\begin{lemma}\label{lem:exc_266_gen}
  If a strongly regular graph with $v=(q^2+2q-1)(q^2+3q+1)$, $e^+ = q$ and $e^- = -q^2-q$ has 
  $m^-+1$ Delsarte cocliques, then there exists a strongly regular graph with parameters
  $v = q^2(q+3)^2$, $e^+ = q$ and $e^- = -q^2-2q$.
\end{lemma}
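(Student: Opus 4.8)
The plan is to construct the required graph explicitly; for $q=2$ this is the classical construction of the Higman--Sims graph from the $M_{22}$ graph (and for $q=1$ it builds the Clebsch graph from the Petersen graph). Write $\Gamma$ for the given strongly regular graph. From the stated eigenvalues, \eqref{eq:srg_ev_identities1} and \eqref{eq:srg_ev_identities3} give $k=q^2(q+2)$, $\mu=q^2$, $\lambda=0$ and $m^-=(q+1)(q^2+2q-1)$, and the parameters satisfy $ve^-=m^-(e^--k)$, so Theorem~\ref{thm:symmdesign} applies. Let $\mathcal{Y}=\{Y_0,\dots,Y_{m^-}\}$ be the $m^-+1$ Delsarte cocliques; by Theorem~\ref{thm:symmdesign}(b) any two of them meet in exactly $q^2+q-1$ vertices, and by Theorem~\ref{thm:symmdesign}(d) every vertex of $\Gamma$ lies in exactly $q^2+q$ of them and $v(q^2+q)=m^-(m^-+1)$. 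I would then define a graph $\Sigma$ on the disjoint union $V(\Gamma)\sqcup\mathcal{Y}\sqcup\{\infty\}$ whose edges are: two vertices of $V(\Gamma)$ adjacent exactly when they are adjacent in $\Gamma$; a coclique $Y\in\mathcal{Y}$ adjacent to $x\in V(\Gamma)$ exactly when $x\in Y$; the new vertex $\infty$ adjacent to every element of $\mathcal{Y}$; and no other edges, so that $\mathcal{Y}$ is a coclique of $\Sigma$ and $\infty$ has no neighbour in $V(\Gamma)$. An elementary computation gives $|V(\Sigma)|=v+m^-+2=q^2(q+3)^2$, and counting the neighbours of a vertex of each of the three types (using $k=q^2(q+2)$, $|Y|=m^-$, $|\mathcal{Y}|=m^-+1$, and the ``$q^2+q$ cocliques through a vertex'' fact) shows that $\Sigma$ is regular of degree $q(q^2+3q+1)$.

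Next I would verify that $\Sigma$ is strongly regular with $\lambda'=0$ and $\mu'=q(q+1)$; then \eqref{eq:srg_ev_identities1} gives that its non-principal eigenvalues are $q$ and $-q^2-2q$, which is what is claimed. That $\Sigma$ is triangle-free is a short case check: a triangle inside $V(\Gamma)$ is ruled out by $\lambda=0$; a triangle through $\infty$ would force an edge inside $\mathcal{Y}$ or an edge between $\infty$ and $V(\Gamma)$; and a triangle on $\{Y,x,y\}$ with $x,y\in V(\Gamma)$ would require $x,y\in Y$ and $x\sim_\Gamma y$, impossible since $Y$ is a coclique. For $\mu'$ I would go through the four kinds of non-adjacent pair in $\Sigma$. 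A pair $\{\infty,x\}$ with $x\in V(\Gamma)$ has exactly the $q^2+q$ cocliques through $x$ as common neighbours. A pair $\{Y,Y'\}$ of distinct cocliques has as common neighbours $\infty$ together with the $q^2+q-1$ vertices of $Y\cap Y'$. A pair $\{Y,x\}$ with $x\notin Y$ has as common neighbours the neighbours of $x$ in $\Gamma$ lying in $Y$, of which there are $-e^-=q^2+q$ by Theorem~\ref{thm:hoffmans_bnd}(b). Finally, a pair $\{x,y\}$ of non-adjacent vertices of $\Gamma$ has as common neighbours the $\mu=q^2$ common $\Gamma$-neighbours of $x$ and $y$ together with every coclique containing both of them.

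The only non-trivial point, and the main obstacle, is therefore to show that \emph{every} pair of non-adjacent vertices of $\Gamma$ lies in exactly $q$ Delsarte cocliques; I would do this in two steps. First, by the proof of Theorem~\ref{thm:symmdesign}(d), for a fixed coclique $Y_1$ the sets $Y_1\cap Y_i$ with $i\ne 1$ form a symmetric $2$-$(m^-,\ q^2+q-1,\ q-1)$ design on the point set $Y_1$; since any two vertices of $Y_1$ are non-adjacent in $\Gamma$, this already shows that a non-adjacent pair contained in \emph{some} coclique lies in exactly $q$ of them (namely $Y_1$ and the $q-1$ blocks through it), so every non-adjacent pair lies in $0$ or $q$ cocliques. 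Second, I would double-count the incidences $(\{x,y\},Y)$ with $x\not\sim_\Gamma y$ and $\{x,y\}\subseteq Y$: there are $(m^-+1)\binom{m^-}{2}$ of them, and by the first step this number equals $q$ times the number of non-adjacent pairs lying in at least one coclique. Using $v(q^2+q)=m^-(m^-+1)$ (from the proof of Theorem~\ref{thm:symmdesign}(d)) and the identity $v-1-k=(q+1)(m^--1)$ (an immediate check with the given parameters), the count $(m^-+1)\binom{m^-}{2}$ equals $q$ times the \emph{total} number $\frac{1}{2}v(v-1-k)$ of non-adjacent pairs, which forces every non-adjacent pair into exactly $q$ cocliques. (Alternatively: if $N$ is the $v\times(m^-+1)$ vertex--coclique incidence matrix, then each column of $N$ lies in $\langle j\rangle+V^-$ and $ve^-=m^-(e^--k)$, which together give $AN=e^-(N-\mathbf{J})$ with $\mathbf{J}$ the all-ones matrix of the shape of $N$; combining this with $N^{T}N=q^2(q+2)I+(q^2+q-1)J$ and $\rank N=m^-+1$ pins down $NN^{T}=q^2I-qA+qJ$, whose non-adjacent off-diagonal entries are $q$.) With this claim the fourth case yields $\mu'=q^2+q=q(q+1)$, matching the other three, and $\Sigma$ is the required strongly regular graph.
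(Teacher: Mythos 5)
Your construction is exactly the one in the paper's proof (adjoin one new vertex for each of the $m^-+1$ Delsarte cocliques, with adjacency given by membership, plus one further vertex $z^*=\infty$ joined precisely to the coclique-vertices), so the approach is essentially the same. The only difference is that you spell out the step the paper dismisses as ``easy to verify'' — namely that every non-adjacent pair of vertices of $\Gamma$ lies in exactly $q$ cocliques, which you obtain correctly from the symmetric $2$-$(m^-,\,q^2+q-1,\,q-1)$ design of Theorem \ref{thm:symmdesign}(d) together with a double count (or the incidence-matrix identity $NN^T=q^2I-qA+qJ$).
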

\begin{proof}
  Our graph $\Gamma$ has parameters $(v, k, \lambda, \mu) = ((q^2+2q-1)(q^2+3q+1), q^2(q+2), 0, q^2)$.
  Suppose that we have equality in Theorem \ref{thm:symmdesign} (d) which implies that $q^3+3q^2+q$
  cocliques of size $q^3+3q^2+q-1$ form a symmetric $2$-$(q^3+3q^2+q-1, q^2+q-1, q-1)$ design.
  Now we can construct a strongly regular graph $\Gamma'$ with $(v, k, \lambda, \mu) = (q^2(q+3)^2, q^3+3q^2+q, 0, q^2+q)$
  as follows: The vertices of $\Gamma'$ consist of the vertices of $\Gamma$, the $q^3+3q^2+q$ new 
  vertices representing the $q^3+3q^2+q$ Delsarte cocliques,
  and a new vertex $z^*$ representing the set of all $q^3+3q^2+q$ Delsarte cocliques. Adjacency is defined as follows:
  \begin{itemize}
   \item Two vertices of $\Gamma'$ are adjacent if they are adjacent in $\Gamma$.
   \item A vertex of $x \in \Gamma$ and a vertex $z$ representing a Delsarte coclique $Z$ are adjacent if $x \in Z$.
   \item The neighborhood of $z^*$ is exactly the set of all vertices representing the $q^3+3q^2+q$ Delsarte cocliques.
  \end{itemize}
  Using Theorem \ref{thm:symmdesign} (d) it is easy to verify that
  $\Gamma'$ is a strongly regular graph.
\end{proof}

Strongly regular graphs with parameters $(v, k, \lambda, \mu) = (324, 57, 0, 12)$ do not exist (see Gavrilyuk and Makhnev \cite{Gavrilyuk2005}).
Hence, we obtain the following.

\begin{cor}\label{cor:exc_266}
  A strongly regular graph with $v=266$, $e^+ = 3$ and $e^- = -12$ has at most $m^-$
  Delsarte cocliques.
\end{cor}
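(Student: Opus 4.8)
The plan is to invoke Lemma \ref{lem:exc_266_gen} in contrapositive form, specialized to the single value $q = 3$. First I would check that the hypothesized graph with $v = 266$, $e^+ = 3$, $e^- = -12$ fits the parametric family of Section \ref{sec:m22}: substituting $q = 3$ into $v = (q^2+2q-1)(q^2+3q+1)$ gives $14 \cdot 19 = 266$, into $e^+ = q$ gives $3$, and into $e^- = -q^2 - q$ gives $-12$, so the graph $\Gamma$ in the statement is exactly a member of that family with $q = 3$, and in particular $m^- = (q+1)(q^2+2q-1) = 4 \cdot 14 = 56$. So the claim is that $\Gamma$ has at most $56$ Delsarte cocliques.

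Next I would argue by contradiction: suppose $\Gamma$ has more than $m^-$ Delsarte cocliques. By Theorem \ref{thm:symmdesign} (d), the number of Delsarte cocliques is at most $m^- + 1$, so it must be exactly $m^- + 1 = 57$, and moreover equality in that bound holds. This is precisely the hypothesis of Lemma \ref{lem:exc_266_gen} with $q = 3$. Applying the lemma, there exists a strongly regular graph with $v = q^2(q+3)^2 = 9 \cdot 36 = 324$, $e^+ = q = 3$, and $e^- = -q^2 - 2q = -15$. Reading off the full parameter set from the proof of Lemma \ref{lem:exc_266_gen}, this graph has $(v, k, \lambda, \mu) = (324, q^3 + 3q^2 + q, 0, q^2 + q) = (324, 57, 0, 12)$.

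Finally I would cite the known non-existence result: Gavrilyuk and Makhnev \cite{Gavrilyuk2005} show that no strongly regular graph with parameters $(324, 57, 0, 12)$ exists. This contradicts the existence of $\Gamma'$ produced by Lemma \ref{lem:exc_266_gen}, so the assumption that $\Gamma$ has more than $m^-$ Delsarte cocliques is false, and the corollary follows. I do not expect any real obstacle here — the argument is a direct chain of substitutions and an appeal to two already-established results; the only thing requiring care is matching the parameter arithmetic at $q = 3$ against the general formulas in Lemma \ref{lem:exc_266_gen} so that the reference to \cite{Gavrilyuk2005} applies to exactly the right parameter tuple $(324, 57, 0, 12)$.
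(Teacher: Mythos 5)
Your proposal is correct and follows exactly the paper's route: the paper proves Lemma \ref{lem:exc_266_gen} and then deduces the corollary immediately from the non-existence of a strongly regular graph with parameters $(324, 57, 0, 12)$ due to Gavrilyuk and Makhnev \cite{Gavrilyuk2005}. Your parameter checks at $q=3$ and the contrapositive reading of the lemma (using Theorem \ref{thm:symmdesign}~(d) to upgrade ``more than $m^-$'' to exactly $m^-+1$) are precisely the intended, if unstated, details.
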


\paragraph*{Update December 2020.} By now, Corollary \ref{cor:exc_266} is superseded by recent work by Munemasa and Tonchev \cite{Munemasa2020}.
That is, we do not have any Delsarte cocliques.

Besides the triangular graphs, this is the only family of parameters for which
we could not rule out the existence of $m^-+1$ Delsarte cocliques in general.
Among all graphs up to $1300$ vertices, there is one more examples that can have more than two Delsarte
cocliques.

\begin{lemma}\label{lem:exc_1036}
  A strongly regular graph with $v = 1036$, $e^+ = 5$ and $e^- = -45$ has at most
  $m^-$ Delsarte cocliques.
\end{lemma}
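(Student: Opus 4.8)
The plan is to reduce the statement to the non-existence of the projective plane of order $10$ via part (d) of Theorem \ref{thm:symmdesign}. First I would record the full parameter set attached to $e^+ = 5$, $e^- = -45$: substituting into \eqref{eq:srg_ev_identities3} and \eqref{eq:srg_ev_identities4} gives $k = 375$, $\lambda = 110$, $\mu = 150$, $m^- = 111$, and $v = 1036$, which is consistent with the hypotheses. By Theorem \ref{thm:symmdesign} (d), $\Gamma$ has at most $m^- + 1 = 112$ Delsarte cocliques, so it suffices to rule out the case of exactly $112$.

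Next, suppose for contradiction that $\Gamma$ has exactly $112$ Delsarte cocliques. Then the equality clause of Theorem \ref{thm:symmdesign} (d) forces the existence of a symmetric $2$-$\left(\frac{(e^+)^2 + e^+e^- + e^+ - (e^-)^2}{(e^+)^2 + e^-}, \frac{(e^-+1)e^+}{(e^+)^2 + e^-}, \frac{-(e^+)^2 + e^+}{(e^+)^2 + e^-}\right)$ design. Plugging in $e^+ = 5$ and $e^- = -45$, the three parameters evaluate to $\tilde{v} = 111$, $\tilde{k} = 11$, $\tilde{\lambda} = 1$; that is, a symmetric $2$-$(111, 11, 1)$ design, which is exactly a projective plane of order $10$.

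The final step is to invoke the celebrated result of Lam, Thiel and Swiercz that no projective plane of order $10$ exists. This contradicts the design obtained above, so $\Gamma$ cannot possess $112$ Delsarte cocliques, and therefore it has at most $m^- = 111$ of them.

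There is no real computational obstacle here, but it is worth noting why the lighter tools of Theorem \ref{thm:symmdesign} do not suffice, so that the argument must genuinely appeal to the deep fact about planes of order $10$: part (f) does not apply because $\gcd(e^+, -e^-) = 5 > 1$, and part (e) gives no contradiction since the triple-intersection number $\frac{-(e^+)^2 + e^+}{(e^+)^2 + e^-} = 1$ is an integer. Hence the only available obstruction is the non-existence of the symmetric $2$-$(111, 11, 1)$ design produced by part (d).
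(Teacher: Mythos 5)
Your proposal is correct and follows exactly the paper's argument: equality in Theorem \ref{thm:symmdesign} (d) would force a symmetric $2$-$(111,11,1)$ design, i.e.\ a projective plane of order $10$, whose non-existence by Lam, Thiel and Swiercz gives the contradiction. The parameter verifications and the closing remark on why parts (e) and (f) do not suffice are accurate but only supplement the same route taken in the paper.
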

\begin{proof}
  Suppose that we have equality in Theorem \ref{thm:symmdesign} (d). Then a symmetric
  $2$-$(111, 11, 1)$ design exists. This is equivalent to the existence of a projective
  plane of order $10$ for which non-existence is known \cite{Lam1989}.
\end{proof}

If the set of $m^-+1$ Delsarte cocliques forms a projective plane, then 
$-(e^+)^2 + e^+ = (e^+)^2 + e^-$. By Equation \eqref{eq:srg_ev_identities4},
the number of vertices is an integer if and only if $e^+ \in \{ 1, 2, 5 \}$.
Hence, we cannot find any other strongly regular graphs where equality in Theorem
\ref{thm:symmdesign} (d) induces a projective plane. Indeed we can say the following.

\begin{theorem}
  Let $\Gamma$ be a primitive strongly regular graph with 
  $ve^- = m^-(e^- - k)$. If $\Gamma$ has $m^-+1$
  Delsarte cocliques, then a $2$-$(\tilde{v}, \tilde{k}, c)$ design exists for some constant $c$ and $e^++1$ divides $2c+4$.
\end{theorem}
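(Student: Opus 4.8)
Since Theorem~\ref{thm:symmdesign}(d) already forces the symmetric $2$-$(\tilde v,\tilde k,c)$ design there, whose index is $c=\frac{-(e^+)^2+e^+}{(e^+)^2+e^-}$ (the common size of the intersection of three Delsarte cocliques, Theorem~\ref{thm:symmdesign}(e)), the entire content of the theorem is the divisibility $e^++1\mid 2c+4$. The plan is to extract this from just two integrality facts: that $c$ is an integer, and that the degree $k$ is an integer. Write $D=(e^+)^2+e^-$; since $c$ is well-defined, $D\neq 0$, and all congruences below are taken modulo $e^++1$, where $e^+\equiv -1$ and hence $(e^+)^2\equiv 1$ and $D\equiv e^-+1$.

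First I would record one congruence. From $c\in\mathbb Z$ and the exact identity $cD=e^+-(e^+)^2$, reduction modulo $e^++1$ gives $c(e^-+1)\equiv -2\mod{e^++1}$. From this I would draw two conclusions. Doubling it yields $2ce^-+2c\equiv -4$, i.e. $2c+4\equiv -2ce^-\mod{e^++1}$; hence it suffices to prove $e^++1\mid 2e^-$, for then $e^++1\mid 2ce^-$ and so $e^++1\mid 2c+4$. Also, since $e^++1$ divides $c(e^-+1)+2$, every common divisor of $e^-+1$ and $e^++1$ divides $2$, so $g:=\gcd(e^++1,e^-+1)\in\{1,2\}$.

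To prove $e^++1\mid 2e^-$, I would bring in that $k=\frac{(e^-)^2-e^-e^+}{e^++1}$ is an integer, from Equation~\eqref{eq:srg_ev_identities3}: thus $e^++1\mid (e^-)^2-e^-e^+$, and modulo $e^++1$ this equals $e^-(e^-+1)$, so $e^++1\mid e^-(e^-+1)$. Now write $e^++1=ga$ and $e^-+1=gb$ with $\gcd(a,b)=1$. From $ga\mid e^-\cdot gb$ we get $a\mid e^-b$, hence $a\mid e^-$; therefore $e^++1=ga\mid ge^-$, and since $g\in\{1,2\}$ also $ge^-\mid 2e^-$, so $e^++1\mid 2e^-$. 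This finishes the proof, and when $c=1$ it specialises to the observation before the theorem that a projective plane can arise from Theorem~\ref{thm:symmdesign}(d) only when $e^+\in\{1,2,5\}$.

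The step I expect to require the most care is recognising that integrality of $c$ is not by itself sufficient: the congruence $c(e^-+1)\equiv -2$ pins $2c+4$ down only up to a factor of $2$, and eliminating that ambiguity is exactly what the second integrality condition (that of $k$, or equivalently $\mu$), combined with $\gcd(e^-+1,e^++1)\mid 2$, is used for. It is also worth noting that this congruence is a genuine restriction on the parameters: for many otherwise feasible pairs $(e^+,e^-)$ it fails (equivalently, $c\notin\mathbb Z$), and those are precisely the parameters for which the theorem is vacuous.
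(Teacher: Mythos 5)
Your proof is correct, but it takes a genuinely different route from the paper. The paper eliminates $e^-$ via $e^- = \frac{e^+ - (c+1)(e^+)^2}{c}$, substitutes into the formula \eqref{eq:srg_ev_identities4} for $v$, and reads off the divisibility from the integrality of $v$: the numerator of the resulting expression, reduced modulo $e^++1$, leaves remainder $2c+4$. You never touch $v$; instead you work modulo $e^++1$ with two integrality facts. From $c\in\mathbb{Z}$ (the index of the symmetric design of Theorem \ref{thm:symmdesign}(d), equivalently the triple intersection size in (e)) and the identity $c\bigl((e^+)^2+e^-\bigr)=e^+-(e^+)^2$ you get $c(e^-+1)\equiv -2 \pmod{e^++1}$, hence both $2c+4\equiv -2ce^-$ and $\gcd(e^++1,e^-+1)\mid 2$; from the integrality of $k$ in \eqref{eq:srg_ev_identities3} you get $e^++1 \mid e^-(e^-+1)$, and the gcd argument upgrades this to $e^++1\mid 2e^-$, which finishes the proof. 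I checked each congruence and the gcd step; they are sound (divisibility is sign-agnostic, so the negativity of $e^-$ and of $(e^+)^2+e^-$ causes no trouble), and your use of the integrality of $c$ is exactly as implicit in the paper's own argument, so there is no gap relative to it. What the comparison buys: the paper's proof is a single substitution but hinges on verifying a heavy algebraic identity for $v$, whereas yours is elementary modular arithmetic that is easy to check by hand and yields the extra by-products $e^++1\mid 2e^-$ and $\gcd(e^++1,e^-+1)\in\{1,2\}$, which are additional parameter restrictions not stated in the paper; on the other hand, the paper's computation uses only the standard parameter identities and does not need the gcd manoeuvre. Both arguments, yours and the paper's, implicitly assume the eigenvalues are integers, which is harmless here since conference-graph parameters cannot satisfy $ve^-=m^-(e^--k)$.
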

\begin{proof}
  By Theorem \ref{thm:symmdesign}, we have
  \begin{align*}
    c = \frac{(1-e^+)e^+}{(e^+)^2 + e^-}.
  \end{align*}
  Rearranging yields
  \begin{align*}
    e^- = \frac{e^+ - (c+1)(e^+)^2}{c}.
  \end{align*}
  By Equation \eqref{eq:srg_ev_identities4}, 
  \begin{align*}
  v = &\frac{(e^+ + 1)\left(((c+1)e^+ + c -2)\left( (c+1)^2((e^+)^2 + 1) + (2c - 1)(c + 1)e^+ + 1\right)\right) + 2c+4}{c^2(e^+ + 1)}.
  \end{align*}
  Dividing the numerator by $e^+ + 1$ leaves a remainder of $2c+4$, and thus $e^+ + 1$ must divide $2c+4$.
\end{proof}

For $c = 2$, the $2$-$(\tilde{v}, \tilde{k}, c)$ design is a biplane. The only non-trivial choice of parameters is 
$e^+ = 3$ and $e^- = -12$, so the design in Theorem \ref{thm:symmdesign} (d) has parameters 
$2$-$(56, 11, 2)$. Biplanes with these parameters were classified by Kaski and 
\"{O}sterg\aa{}rd \cite{Kaski2008}. Hence, the smallest case for which we might be able to use
the result to construct new symmetric designs are triplanes, so $c = 3$. Here 
$(e^+, e^-) = (4, -20)$ and $(e^+, e^-) = (9, -105)$ are the two interesting parameter
sets.

\section{Other Graphs with up to \texorpdfstring{$1300$}{1300} Vertices}

In Table \ref{table:1300} we list strongly regular graphs with up to $1300$ vertices. We do not 
include the complements of triangular graphs or graphs with less than $200$ vertices.
The parameters $s_1$ and $s_2$ are the intersection numbers of the quasi-symmetric 2-design in Theorem \ref{thm:quasisdesign}.
The entry \# gives the maximal number of Delsarte cocliques. Notice that except
for Taylor's $2$-graphs it is not known if strongly regular graphs with the given
parameters exist.

\begin{table}
\begin{center}
\begin{tabular}{rrrrrrrrrrll} \toprule
$v$ & $k$ & $\lambda$ & $\mu$ & $e^+$ & $e^-$ & $m^-$ & $s_1$ & $s_2$ & \# & Reference & Remark\\ \midrule
$245$ & $52$ & $3$ & $13$ & $3$ & $-13$ & $49$ & $1$ & $4$ & $2$ &  Sec. \ref{sec:gq}, $q=4$ &  GQ$(4, 12)$\\
$246$ & $85$ & $20$ & $34$ & $3$ & $-17$ & $41$ & $5$ & $8$ & $0$ &  Th. \ref{thm:q_neq_2_p_only_quasisymm}, $p=3$\\
$261$ & $176$ & $112$ & $132$ & $2$ & $-22$ & $29$ & $16$ & $18$ & $0$ &  \cite{Haemers1989,Tonchev1986} \\
$266$ & $45$ & $0$ & $9$ & $3$ & $-12$ & $56$ & $0$ & $3$ & $0$ &  Cor. \ref{cor:exc_266} et seq. &  Gen. $M_{22}$\\
$287$ & $126$ & $45$ & $63$ & $3$ & $-21$ & $41$ & $9$ & $12$ & $2$ &  Th. \ref{thm:symmdesign} (e)\\
$344$ & $175$ & $78$ & $100$ & $3$ & $-25$ & $43$ & $13$ & $16$ & $0$ &  Sec. \ref{sec:taylor}, $q=7$ &  Taylor's\\
$490$ & $297$ & $168$ & $198$ & $3$ & $-33$ & $49$ & $21$ & $24$ & $2$ &  Th. \ref{thm:symmdesign} (e)\\
$532$ & $156$ & $30$ & $52$ & $4$ & $-26$ & $76$ & $6$ & $10$ & $2$ &  Th. \ref{thm:symmdesign} (e)\\
$568$ & $217$ & $66$ & $93$ & $4$ & $-31$ & $71$ & $11$ & $15$ & $2$ &  Th. \ref{thm:symmdesign} (e)\\
$606$ & $105$ & $4$ & $21$ & $4$ & $-21$ & $101$ & $1$ & $5$ & $2$ &  Sec. \ref{sec:gq}, $q=5$ &  GQ$(5, 20)$\\
$639$ & $288$ & $112$ & $144$ & $4$ & $-36$ & $71$ & $16$ & $20$ & $2$ &  Th. \ref{thm:symmdesign} (e)\\
$667$ & $96$ & $0$ & $16$ & $4$ & $-20$ & $115$ & $0$ & $4$ & $116$ &  Sec. \ref{sec:m22} & Gen. $M_{22}$\\
$672$ & $451$ & $290$ & $328$ & $3$ & $-41$ & $56$ & $29$ & $32$ & $0$ &  Th. \ref{thm:q_neq_2_p_only_quasisymm}, $p=3$\\
$730$ & $369$ & $168$ & $205$ & $4$ & $-41$ & $73$ & $21$ & $25$ & $1$ &  Sec. \ref{sec:taylor}, $q=9$  &  Taylor's\\
$836$ & $460$ & $234$ & $276$ & $4$ & $-46$ & $76$ & $26$ & $30$ & $2$ &  Th. \ref{thm:symmdesign} (e)\\
$1003$ & $300$ & $65$ & $100$ & $5$ & $-40$ & $118$ & $10$ & $15$ & $2$ &  Th. \ref{thm:symmdesign} (e)\\
$1016$ & $259$ & $42$ & $74$ & $5$ & $-37$ & $127$ & $7$ & $12$ & $0$ &  Th. \ref{thm:q_neq_2_p_only_quasisymm}, $p=5$\\
$1017$ & $344$ & $91$ & $129$ & $5$ & $-43$ & $113$ & $13$ & $18$ & $0$ &  Th. \ref{thm:q_neq_2_p_only_quasisymm}, $p=5$\\
$1036$ & $375$ & $110$ & $150$ & $5$ & $-45$ & $111$ & $15$ & $20$ & $111$ &  Lem. \ref{lem:exc_1036}\\
$1080$ & $221$ & $22$ & $51$ & $5$ & $-34$ & $144$ & $4$ & $9$ & $0$ &  Th. \ref{thm:q_neq_2_p_only_quasisymm}, $p=5$\\
$1090$ & $441$ & $152$ & $196$ & $5$ & $-49$ & $109$ & $19$ & $24$ & $2$ &  Th. \ref{thm:symmdesign} (e)\\
$1122$ & $209$ & $16$ & $44$ & $5$ & $-33$ & $153$ & $3$ & $8$ & $0$ &  Th. \ref{thm:q_neq_2_p_only_quasisymm}, $p=5$\\
$1136$ & $855$ & $630$ & $684$ & $3$ & $-57$ & $71$ & $45$ & $48$ & $1$ &  Th. \ref{thm:symmdesign} (b)\\
$1199$ & $550$ & $225$ & $275$ & $5$ & $-55$ & $109$ & $25$ & $30$ & $2$ &  Th. \ref{thm:symmdesign} (e)\\
$1267$ & $186$ & $5$ & $31$ & $5$ & $-31$ & $181$ & $1$ & $6$ & $2$ &   Sec. \ref{sec:gq}, $q=6$ &  GQ$(6, 30)$\\
 \bottomrule
\end{tabular}
\end{center}
 \caption{All strongly regular graphs with $ve^- = m^-(e^--k)$ and $e^+>1$ for $v \in [200, 1300]$.}
 \label{table:1300}
\end{table}

\section*{Acknowledgments} The work in this paper was a joint project of the
Discrete Mathematics Research Group at the University of Regina, attended by
all the authors.
We would like to thank Willem H. Haemers, Koen Thas and Joseph A. Thas for pointing us to various 
references, in particular \cite{Haemers1989} and \cite{Makhnev2003}. 
We would like to thank Alexander L. Gavrilyuk for pointing out the idea for Lemma \ref{lem:exc_266_gen}.

\bibliographystyle{plain}


\end{document}